\newcommand\C{{\cal C}}
\renewcommand\S{{\cal S}}
\newcommand\K{{\cal K}}
\newcommand\PG{{\rm{PG}}}
\newcommand\PGL{{\mbox{PGL}}}
\newcommand\GF{{\rm{GF}}}
\newcommand\D{{\cal D}}
\renewcommand{\P}{\mathcal{P}}
\newcommand\st{:}
\newcommand\A{{\cal A}}
\newcommand\Ooo{{\cal O}}
\newcommand{\linfty}{\ell_{\infty}}
\newcommand{\li}{\ell_{\infty}}
\renewcommand{\H}{{\mathcal H}}
\newcommand{\PGO}{\mbox{PGO}}
\newcommand\sinfty{{\Sigma_\infty}}
\newcommand\si{{\Sigma_\infty}}
\renewcommand\S{{\cal S}}
\newcommand\cpt{$\C$-point}
\newcommand\cpl{$\C$-plane}
\newcommand\cln{$\C$-line}
\newcommand\ppt{$\infty$-point}
\newcommand\spt{$\star$-point}
\newcommand\sln{$\star$-line}
\newcommand\pln{$\infty$-line}
\newcommand\BB{Bruck Bose representation}
\newcommand\gfqp{\GF(q)\cup\{\infty\}}
\newcommand\nn{{2^n}}
\newcommand\cplus{\C\cup\{P,N\}}
\newcommand{\x}{s}
\newcommand\eo{(1,0,0,0,0)}
\newcommand\eii{(0,0,1,0,0)}
\newcommand\eti{(t^{2^n},1,0,0,0)}
\newcommand\etii{(0,0,t,1,0)}
\newcommand\Ptu{\left(\frac{\x t^\nn}{(t+u)^\nn},\frac{\x}{(t+u)^\nn},\frac{\x t}{t+u},\frac{\x}{t+u},1\right)}
\newcommand{\Label}{\label}
\newtheorem{theorem}{Theorem}[section]
\newtheorem{lemma}[theorem]{Lemma}
\newtheorem{corollary}[theorem]{Corollary}
\newtheorem{conjecture}{Conjecture}
\newenvironment{proof}{\noindent{\bf Proof}\hspace{0.5em}}
    { \null  \hfill $\square$ \par}
\newcommand{\R}{{\cal R}}
\begin{document}
\title{A characterisation of translation ovals in finite even order planes}

\author{S.G. Barwick and Wen-Ai Jackson\\
\date{}
School of Mathematics, University of Adelaide\\
Adelaide 5005, Australia
}

\maketitle

\begin{abstract}
In this article we consider a set $\C$ of points in $\PG(4,q)$, $q$ even, satisfying certain combinatorial properties with respect to the planes of $\PG(4,q)$. We show that there is a regular spread in the hyperplane at infinity, such that in the corresponding Bruck-Bose plane $\PG(2,q^2)$, the points corresponding to $\C$ form a translation hyperoval, and conversely. 
\end{abstract}


\section{Introduction}

In this article we first consider a non-degenerate conic in $\PG(2,q^2)$, $q$ even. We look at the corresponding point set in the Bruck-Bose representation in $\PG(4,q)$, and study its combinatorial properties (details of the Bruck-Bose representation are given in Section~\ref
{sect-background}). Some properties of this set were investigated in \cite{barwcaps}. In this article we are interested in combinatorial properties relating to planes of $\PG(4,q)$. We consider a set of points in $\PG(4,q)$ satisfying certain of these combinatorial properties and find that the points correspond to a translation oval in the Bruck-Bose plane $\PG(2,q^2)$. 

In \cite{conicqodd}, the case when $q$ is odd is considered, and we show that given a set of points in $\PG(4,q)$ satisfying the following combinatorial properties, we can reconstruct the conic in $\PG(2,q^2)$. 
We use the following terminology  in $\PG(4,q)$: if the  hyperplane at infinity is denoted $\si$, then we call the points of $\PG(4,q)\setminus\si$ \textit{affine points}.

\begin{theorem}{\rm \cite{conicqodd}}\Label{mainthmqodd} Let $\sinfty$ be the hyperplane at infinity in $\PG(4,q)$,
  $q\ge7$, $q$ odd.
Let $\C$ be a set of $q^2$ affine points, called $\C$-points,  and suppose there exists a set of 
planes called \textit{\cpl s} satisfying the following properties:
\begin{enumerate}
\item Each $\C$-plane\ meets $\C$ in a $q$-arc. 
\item  Any two distinct $\C$-points lie in a unique  \cpl.
\item The affine points of $\PG(4,q)$ are of three types: points of $\C$;
  points on no \cpl; and points on
  exactly two \cpl s.  
  \item If a plane
  meets $\C$ in more than four points, it is a \cpl.
\end{enumerate}
Then there exists a unique spread $\S$ in $\sinfty$ so that in the Bruck-Bose translation plane $\P(\S)$, the $\C$-points  form a $q^2$-arc of $\P(\S)$.
Moreover, the spread $\S$ is regular, and so $\P(\S)\cong\PG(2,q^2)$, and
the $q^2$-arc can be completed to a conic of $\PG(2,q^2)$. 
\end{theorem}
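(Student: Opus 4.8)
The plan is to start with routine double counting. Each \cpl\ carries a $q$-arc and so contains $\binom q2$ pairs of \cpt s, whence by~(2) there are exactly $\binom{q^2}2/\binom q2=q^2+q$ \cpl s, and through each \cpt\ pass $(q^2-1)/(q-1)=q+1$ of them, whose $q$-arcs partition the other $q^2-1$ \cpt s. By~(2) two \cpl s share at most one \cpt; applying~(4) to the plane spanned by the line common to two \cpl s through a \cpt\ $P$ and a third \cpt\ off that plane then shows the \cpl s on $P$ meet pairwise only in $P$. A counting argument using $q\ge 7$ rules out $\C$ lying in a hyperplane of $\PG(4,q)$, so $\C$ spans $\PG(4,q)$. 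Property~(3) will later pin down the numbers of affine points on no \cpl\ and on exactly two \cpl s, once one knows how \cpl s meet $\si$.

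\textbf{Step 2: the configuration at infinity.} Here I expect the real work. The goal is to single out a line $s_\infty\subseteq\si$ so that every \cpl\ meets $\si$ in a single point of $s_\infty$; each \cpl\ will then represent a Baer subplane of the eventual plane, tangent to $\linfty$ at the point $P_\infty$ corresponding to $s_\infty$. I would analyse, for a \cpt\ $P$, the $q+1$ \cpl s on $P$: the heart of the argument is to show that a \cpl\ cannot meet $\si$ in a line, that the $q+1$ resulting points of $\si$ are distinct (immediate, since two \cpl s on $P$ meeting $\si$ at the same point would share a line, contradicting Step~1), and that they are collinear --- carried out by examining intersections of \cpl s on $P$ with \cpl s on other \cpt s and invoking~(4) to forbid planes carrying five or more \cpt s. Matching this across all \cpt s forces a single line $s_\infty$, together with the incidence data: for each \cpt\ $P$ and each point $X\in s_\infty$ there is exactly one \cpl\ on $P$ meeting $\si$ at $X$, and each point of $s_\infty$ lies on exactly $q$ \cpl s, whose $q$-arcs partition $\C$.

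\textbf{Step 3: the spread and the $q^2$-arc.} With $s_\infty$ in hand I would reconstruct the remaining $q^2$ lines of the spread from the internal geometry of the \cpl s and the affine points off $\C$ --- for instance from the tangent lines of the $(q+1)$-arcs obtained by adjoining $X_\pi=\pi\cap\si$ to the $q$-arc of a \cpl\ $\pi$ --- and verify that these $q^2$ lines, together with $s_\infty$, are pairwise skew and cover $\si$, i.e.\ form a spread $\S$. In $\P(\S)$ the \cpt s then form a $q^2$-arc: a line of $\P(\S)$ is either $\linfty$, which contains no \cpt, or an affine plane meeting $\si$ in a spread line and hence not a \cpl\ (for \cpl s meet $\si$ in a point); by~(4) such a plane carries at most four \cpt s, and the cases of three and four are excluded using the incidence data of Step~2.

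\textbf{Step 4: regularity and the conic.} To prove $\S$ regular I would show the \cpl s become order-$q$ Baer subplanes of $\P(\S)$ through $P_\infty$ meeting $\C$ in $(q+1)$-subarcs, and deduce that the regulus determined by any three lines of $\S$ lies in $\S$ by locating it inside one such configuration (or, alternatively, by reading reguli off the conic structure on $\C$ directly). Once $\P(\S)\cong\PG(2,q^2)$, the classical arc theorems of Segre and Thas show that a $q^2$-arc of $\PG(2,q^2)$ with $q$ odd lies in a unique conic, which then has point set $\C\cup\{P_\infty\}$; and $\S$ is unique because $s_\infty$ was forced and the rest of $\S$ was then determined by $\C$. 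The difficulty is concentrated in Step~2, with the regularity argument of Step~4 a secondary hurdle.
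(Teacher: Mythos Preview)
First, note that this paper does \emph{not} contain a proof of Theorem~\ref{mainthmqodd}: the result is quoted from the companion paper \cite{conicqodd} as motivation for the even-$q$ analogue (Theorem~\ref{main-theorem}), and only the latter is proved here. So there is no proof in the present paper to compare your proposal against directly; one can only judge your outline against the parallel argument the authors give for even $q$ in Section~4.

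That said, your Step~2 contains a genuine geometric error that breaks the whole plan. In $\PG(4,q)$ the hyperplane $\si$ has dimension $3$, and any plane not contained in $\si$ meets it in a subspace of dimension $2+3-4=1$, i.e.\ a \emph{line}. It is impossible for a \cpl\ to meet $\si$ in a single point, so the picture you describe --- a line $s_\infty\subset\si$ carrying one point from each \cpl\ --- cannot arise. Everything built on it in Steps~3 and~4 (recovering the spread from tangent lines at the single infinite point of each \cpl, etc.) then collapses.

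The correct picture, visible already in the even-$q$ argument of this paper, is that each \cpl\ meets $\si$ in a line. The work analogous to your Step~2 is to control how these ``$\C$-lines'' sit inside $\si$: in Section~\ref{section-cplanes} the authors show (for even $q$) that \cpl s in the same parallel class of the affine plane $\A$ share a common $\C$-line, yielding $q+1$ pairwise-skew $\C$-lines, and in Section~\ref{section-cline} they locate two further transversal lines $t_N,t_\infty$ in $\si$. For odd $q$ and a conic tangent to $\ell_\infty$ at $P_\infty$, the expected configuration is that the $\C$-lines all meet the eventual spread line $s_\infty$ corresponding to $P_\infty$ (because the relevant Baer subplanes all contain $P_\infty$); the special line you are looking for is recovered as the common transversal of the $\C$-lines, not as a locus of isolated intersection points. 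Your Step~1 counting is fine and your Step~4 endgame (regularity, Segre) is the right shape, but Step~2 needs to be rebuilt around lines at infinity rather than points.
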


The case when $q$ is even is more complex. The combinatorial properties only allow us to reconstruct a translation oval in $\PG(2,q^2)$. The main result of this article is the following theorem.

\begin{theorem}\Label{main-theorem}
Consider $\PG(4,q)$, $q$ even, $q>2$, with the hyperplane at infinity denoted by $\si$. Let $\C$ be a set of $q^2$ affine points, called \emph{\cpt s} and consider a set of planes called \emph{\cpl s} which satisfies the following:
\begin{itemize}
\item[{\rm (A1)}]\Label{ass-arc} Each \cpl\ meets $\C$ in a $q$-arc.  
\item[{\rm (A2)}] \Label{ass-uniq} Any two distinct \cpt s lie in a unique \cpl.
\item[{\rm (A3)}]\Label{ass-partition} The affine points that are not in $\C$ lie on exactly one \cpl.
\item[{\rm (A4)}]\Label{ass-nothreeplane} Every plane which meets $\C$ in at least three points either meets $\C$ in exactly four points or is a \cpl.
\end{itemize}
Then there exists a regular spread $\S$ in $\si$ such that in the Bruck-Bose plane $\P(\S)\cong\PG(2,q^2)$, the $\C$-points, together with two extra points on $\li$, form a translation hyperoval of $\PG(2,q^2)$.
\end{theorem}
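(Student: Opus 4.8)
The strategy is to parallel the odd-order argument of Theorem \ref{mainthmqodd} as far as possible, then handle the genuinely new even-order phenomena (the nucleus/two extra points, and the fact that $q$-arcs in planes are conics only up to the known classification). First I would study the ``trace'' structure that the \cpl s induce on $\si$: for each \cpl\ $\pi$, the line $\pi\cap\si$ is a line of $\si$, and I would show, using (A1)--(A4) together with a double-counting argument on the $q^2$ \cpt s and the $q^2$ \cpl s through a fixed \cpt, that these lines at infinity fall into a small number of families. The aim of this step is to produce a set $\S$ of lines of $\si$ that is forced to be a spread: every affine point lies on exactly one \cpl\ or is a \cpt\ lying on $q^2$ of them, and in either case the lines at infinity of the \cpl s through affine points must cover $\si$ without repetition. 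I would then prove $\S$ is a \emph{regular} spread by exhibiting enough reguli inside it — the reguli should come from the conic-like behaviour of \cpl s meeting in a common line of $\si$, invoking (A4) to pin down which planes through such a line are \cpl s.

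Once $\S$ is a regular spread, I pass to the Bruck--Bose plane $\P(\S)\cong\PG(2,q^2)$, where the \cpt s become $q^2$ affine points and each \cpl\ becomes a line of $\P(\S)$ together with a point of $\li$; property (A1) says each such line meets $\C$ in $q$ points, property (A2) says two \cpt s determine a unique such line, and property (A3) controls the affine points off $\C$. I would argue that $\C$ meets every affine line of $\P(\S)$ in $0$, $1$, $2$, or $q$ points, and every line of $\li$-type through the relevant infinite points in $q$ points, so that $\C$ plus a bounded number of points on $\li$ forms an arc; counting secants and tangents then forces it to be a hyperoval, i.e.\ $\C$ plus exactly two points of $\li$. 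The key input is that in $\PG(2,q^2)$, $q$ even, a point set of size $q^2+2$ meeting every line in $0$ or $2$ points is a hyperoval; the combinatorial conditions are tailored to deliver exactly this.

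The final step is to upgrade ``hyperoval'' to ``\emph{translation} hyperoval.'' Here I would use the affine structure of $\P(\S)$: the $q^2$ \cpt s are precisely the affine points of the hyperoval, and a hyperoval whose affine part is a full coset-like orbit — more precisely, whose affine points form a group under the translations of $\P(\S)$ fixing the two points at infinity, or equivalently whose affine part is $\{(t,t^{2^k})\st t\in\GF(q^2)\}$ up to collineation — is by definition a translation hyperoval. To establish the translation property I would show that the two distinguished points of $\li$ lying on the hyperoval are fixed by a group of elations of $\P(\S)$ acting transitively on the \cpt s; these elations should be read off from the additive structure visible in $\PG(4,q)$, namely translations of the affine space $\PG(4,q)\setminus\si$ that preserve $\C$ and the family of \cpl s. The main obstacle I anticipate is precisely the regularity of $\S$: extracting enough reguli purely from the combinatorial axioms (A1)--(A4), without the rigidity that ``conic'' gave in the odd case, is delicate, and I expect the bulk of the work — and the reason $q>2$ is needed — to sit in that step, with the hyperoval and translation conclusions following more routinely once the regular spread is in hand.
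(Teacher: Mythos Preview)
Your plan contains a fundamental structural error: the lines $\pi\cap\si$ for $\pi$ a \cpl\ do \emph{not} form a spread of $\si$, and cannot be made into one. A short count shows why. By (A1) and (A2), the \cpt s and \cpl s form a $2$-$(q^2,q,1)$ design, i.e.\ an affine plane of order $q$; hence there are exactly $q^2+q$ \cpl s, and each \cpt\ lies on $q+1$ of them (not $q^2$). The paper shows (Lemmas~\ref{no-single-int} and~\ref{parallel-class}) that two \cpl s meet $\si$ in the same line if and only if they are parallel in this affine plane, so there are only $q+1$ distinct ``$\C$-lines'' in $\si$, pairwise skew. Since a spread of $\si\cong\PG(3,q)$ has $q^2+1$ lines, these $q+1$ lines are very far from a spread; your covering-without-repetition argument cannot go through.

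Consequently your second step also collapses: in the Bruck--Bose plane $\P(\S)$, an affine plane of $\PG(4,q)$ corresponds to a \emph{line} of $\P(\S)$ only when it contains a spread line, and to a Baer subplane secant to $\li$ otherwise. Since the \cpl s meet $\si$ in $\C$-lines, which will \emph{not} be spread lines, the \cpl s correspond to Baer subplanes of $\PG(2,q^2)$, not to lines (compare Theorem~\ref{oval-cplanes-props}). So the passage ``each \cpl\ becomes a line of $\P(\S)$ \ldots\ property (A1) says each such line meets $\C$ in $q$ points'' is simply false, and the hyperoval argument built on it does not start.

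What the paper actually does is rather different. After isolating the $q+1$ pairwise skew $\C$-lines, it shows that the two extra points completing each $q$-arc to a hyperoval all lie in $\si$ and in fact assemble into two disjoint lines $t_N,t_\infty$ of $\si$, each a transversal of all $q+1$ $\C$-lines (Lemmas~\ref{plus-pts-coincide}--\ref{line-ppts}). It then uses the Klein correspondence to show the $\C$-lines, viewed inside the hyperbolic congruence on $t_N,t_\infty$, correspond to a $(q+1)$-arc on a hyperbolic quadric of $\PG(3,q)$, hence to a curve $\C(2^n)$; this gives explicit coordinates for the $\C$-lines (Theorem~\ref{cline-coord}). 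Finally it coordinatises all the \cpt s and exhibits, by an explicit change of coordinates, a regular spread $\S$ containing $t_N$ and $t_\infty$ (not the $\C$-lines) in which the \cpt s become the affine points of a translation oval. The regularity of $\S$ and the translation property are read off from these coordinates; neither is proved by a reguli-count or a group argument of the kind you sketch. Note also that the spread is not uniquely determined by (A1)--(A4), as discussed in Section~\ref{open}, so any argument that purports to ``force'' a single spread from the combinatorics must fail.
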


We begin in Section~\ref{sect-background} with the necessary background material on the Bruck-Bose representation.
In Section~\ref{sect-conic-props} we investigate combinatorial properties of conics and translation ovals in $\PG(2,q^2)$, $q$ even, and show that they satisfy properties (A1-4) of Theorem~\ref{main-theorem}. 
The rest of the article is devoted to proving Theorem~\ref{main-theorem}.
In Section~\ref{open} we discuss further aspects of the problem, and avenues for further work.

\section{The Bruck-Bose representation}\label{sect-background}\label{sect:BB}

We use the Bruck-Bose representation of $\PG(2,q^2)$ in $\PG(4,q)$ introduced in \cite{andr54,bruc64,bruc66}. See \cite{barw08} for more details of this representation. 
Let $\sinfty$ be the hyperplane at infinity of $\PG(4,q)$ and let $\S$ be a spread of
$\sinfty$.
Call the points of $\PG(4,q)\setminus\sinfty$  {\em
  affine points} and the points in $\si$ {\em infinite points}.
The lines and planes of $\PG(4,q)$ that are not contained in $\sinfty$ are called {\em
  affine lines} and {\em affine planes} respectively.

Consider the incidence structure whose {\em points} are the
affine points of $\PG(4,q)$, whose {\em lines} are the affine planes of
PG$(4,q)$ which contain a line of the  spread $\S$, and where {\it incidence\/} is inclusion.
This incidence structure is an affine plane, and can be
completed to a translation plane denoted $\P(\S)$ 
by adjoining the line at infinity $\linfty$ whose points are the
elements of the spread $\S$. 
The translation plane $\P(\S)$ is the Desarguesian plane $\PG(2,q^2)$ if and only if the spread $\S$ is regular.
Note that the affine planes of  $\PG(4,q)$ that do not contain a line of $\S$ correspond to 
Baer subplanes of $\PG(2,q^2)$ secant to $\li$.

We introduce the coordinate notation that we will use in the Bruck-Bose representation of $\PG(2,q^2)$ in $\PG(4,q)$ (so $\S$ is a regular spread). See \cite{barw08} for more details of this coordinatisation. In $\PG(2,q^2)$, points have coordinates $(x,y,z)$, $x,y,z\in\GF(q^2)$ and the line at infinity has equation $z=0$. In $\PG(4,q)$, points have coordinates $(x_0,\ldots,x_4)$, $x_i\in\GF(q)$, and we let the hyperplane at infinity $\si$ have equation $x_4=0$. 
Let $\tau$ be a primitive element in $\GF(q^2)$ with primitive polynomial $x^2-t_1x-t_0$ over $\GF(q)$. Let $\alpha,\beta\in\GF(q^2)$, then we can uniquely write $\alpha=a_0+a_1\tau$, $\beta=b_0+b_1\tau$ for $a_i,b_i\in\GF(q)$. The Bruck-Bose map takes a point $P=(\alpha,\beta,1)\in\PG(2,q^2)\setminus\li$ to the point $P=(a_0,a_1,b_0,b_1,1)\in\PG(4,q)\setminus\si$. In $\PG(2,q^2)$, $\li$ has points $\{P_\infty=(0,1,0)\}\cup\{P_\delta=(1,\delta,0)\st \delta\in\GF(q^2)\}$. If $\delta=d_0+d_1\tau$, for $d_0,d_1\in\GF(q)$, then in $\PG(4,q)$, these points correspond to lines $p_\infty, p_\delta$ of the regular spread $\S$ where
\begin{eqnarray*}
p_\infty&=&\langle(0,0,1,0,0),(0,0,0,1,0)\rangle,\\
p_\delta&=&\langle(1,0,d_0,d_1,0),(0,1,t_0d_1,d_0+t_1d_1,0)\rangle.
\end{eqnarray*}

\section{Properties of conics and translation ovals}\Label{sect-conic-props}

Let $\overline\C$ be a non-degenerate conic in $\PG(2,q^2)$, $q$ even, that meets $\li$ in a point $P_\infty$, so $\overline\C$ has nucleus $N\in\li$. Let $\C=\overline\C\setminus\{P_\infty\}$. We use the term  \textit{\cpt s} for (affine) points of $\PG(2,q^2)$ in $\C$. If $\alpha$ is a Baer subplane secant to $\li$, then $\alpha$ meets $\overline\C$ in either a subconic, or in at most four points; so $\alpha$ meets $\C$ in either a $q$-arc or in at most four points. Baer subplanes secant to $\li$ that meet $\C$ in a $q$-arc are called \textit{\cpl s}. The following properties about \cpl s and \cpt s is straightforward to prove, and is also a special case of Theorem~\ref{oval-cplanes-props}.

\begin{lemma}\Label{conic-cplanes-props}
 Let $\overline\C=\C\cup\{P_\infty\}$ be a conic  in $\PG(2,q^2)$, $q$ even, that meets $\li$ in a point $P_\infty$. Define $\C$-points to be points of $\C$, and $\C$-planes to be Baer subplanes secant to $\li$ that meet $\C$ in a $q$-arc. Then the following hold.
\begin{enumerate}
\item\Label{C1}  Two distinct \cpt s lie in a unique \cpl. 
\item\Label{C2} Every affine point not in $\C$ lies in a unique \cpl.
\item\Label{C3}  If a Baer subplane secant to $\li$ contains at least three \cpt s, then it either contains exactly four \cpt s, or is a \cpl.
 \end{enumerate}
\end{lemma}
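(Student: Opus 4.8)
The plan is to recognise $\overline{\C}$, and everything attached to it, from inside $\PG(2,q^2)$. Since $\overline{\C}$ meets $\li$ only in $P_\infty$, the line $\li$ is the tangent to $\overline{\C}$ at $P_\infty$, so the nucleus satisfies $N\in\li$. I would coordinatise $\PG(2,q^2)$ so that $\overline{\C}$ is a standard conic, regarded as a normal rational curve with parameter line $\PG(1,q^2)$ and with $P_\infty$ at the parameter $\infty$; then the affine $\C$-points are parametrised by $\GF(q^2)$, which I view as the affine plane $\AG(2,q)$ over $\GF(q)$. The first step is to set up the dictionary
\[
\{\,\text{\cpl s}\,\}\ \longleftrightarrow\ \{\,\text{subconics of $\overline{\C}$ through $P_\infty$}\,\}\ \longleftrightarrow\ \{\,\text{Baer sublines of $\PG(1,q^2)$ through $\infty$}\,\}\ \longleftrightarrow\ \{\,\text{lines of $\AG(2,q)$}\,\},
\]
under which the $\C$-points on a \cpl\ are exactly the points of $\AG(2,q)$ on the corresponding line. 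This needs three ingredients: (i) the stated fact that a Baer subplane secant to $\li$ meets $\overline{\C}$ in a subconic or in at most four points (so one meeting $\C$ in at least five points meets $\overline{\C}$ in a subconic); (ii) the standard bijection between subconics of $\overline{\C}$ and Baer sublines of its parameter line; and (iii) that a subconic lies in a Baer subplane secant to $\li$ exactly when it contains $P_\infty$. For (iii) I would use that the stabiliser of $(\overline{\C},P_\infty)$ in $\PGL(3,q^2)$ is isomorphic to $\mathrm{AGL}(1,q^2)$, automatically fixes $\li$ (the tangent at $P_\infty$), and is transitive on Baer sublines of $\PG(1,q^2)$ through $\infty$, hence on subconics through $P_\infty$; as the standard subconic lies in the standard $\PG(2,q)$, which is secant to $\li$, so do all of them, whereas the complementary orbit of subconics (those avoiding $P_\infty$) lies in Baer subplanes tangent to $\li$ at $N$.

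With the dictionary in hand, (\ref{C1}) is immediate: two affine $\C$-points are two points of $\AG(2,q)$ and lie on a unique line, hence on a unique \cpl. For (\ref{C2}) I would argue by a short coordinate calculation: write the \cpl\ attached to the line $s+r\GF(q)$ of $\GF(q^2)$ as the image of $\PG(2,q)$ under an explicit projectivity fixing $\overline{\C}$, $P_\infty$ and $\li$; the condition for a fixed non-$\C$ affine point $Q$ to lie on it then becomes two $\GF(q)$-rationality conditions on $(s,r)$, and solving them shows the admissible pairs form exactly one of the $(q-1)q$-element reparametrisation classes of a single line of $\AG(2,q)$, so there is exactly one \cpl\ through $Q$. (Equivalently, a double count of incident pairs (non-$\C$ affine point, \cpl) gives an average of one, which this calculation sharpens to exactly one.)

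For (\ref{C3}), let $\beta$ be a Baer subplane secant to $\li$ with at least three $\C$-points, so $|\beta\cap\overline{\C}|\ge 3$. If $|\beta\cap\overline{\C}|\ge 5$ it is a subconic by (i), necessarily through $P_\infty$ by (iii) since $\beta$ is secant, so $\beta$ is a \cpl. Otherwise, choose coordinates with $\beta=\PG(2,q)$ and $\li:x=0$, write $\overline{\C}=V(F)$ for a quadratic form $F$ over $\GF(q^2)$, and decompose $F=F_0+\omega F_1$ along a $\GF(q)$-basis $\{1,\omega\}$ of $\GF(q^2)$; then $\beta\cap\overline{\C}=V(F_0)\cap V(F_1)$, the base locus of the pencil $\langle F_0,F_1\rangle$ of conics of $\PG(2,q)$. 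If $F_0,F_1$ are $\GF(q)$-proportional then $\overline{\C}$ is defined over $\GF(q)$, $\beta\cap\overline{\C}$ is a subconic containing $P_\infty$, and $\beta$ is a \cpl; so assume the pencil is one-dimensional, whence its base locus has at most four points and $\overline{\C}$ is not defined over $\GF(q)$. If $P_\infty\in\beta$, use the tangency normal form $F=aX^2+bY^2+dXY+eXZ$ (with $b,e\ne 0$) of a conic tangent to $\li$ at $(0,0,1)$: eliminating a variable between $F_0=0$ and $F_1=0$ leaves an equation of degree at most two, so the affine base locus has at most two points and $\beta$ has at most two $\C$-points --- against the hypothesis. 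If $P_\infty\notin\beta$, then $\overline{\C}$ has no $\GF(q)$-point on $\li$, so the base locus is entirely affine, and using the tangency normal form $F=aX^2+b(Y+\nu Z)^2+X(dY+eZ)$ (with $\nu\notin\GF(q)$) the elimination leaves a polynomial of degree at most four in one affine coordinate whose cubic term vanishes; in characteristic $2$ such a polynomial, when genuinely quartic, is a perfect square or is separable, with root sum zero, so it has $0$, $1$, $2$ or $4$ roots in $\GF(q)$ but never three, while the lower-degree cases give at most two base points. Hence the base locus has exactly four points, so $\beta$ has exactly four $\C$-points, as required.

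I expect the effort to lie in the supporting material rather than in any one hard step: the standard subconic/Baer-subline correspondence in (ii) and the transitivity argument in (iii) that pins down exactly which secant Baer subplanes are $\C$-planes, together with the three coordinate normalisations in (\ref{C3}) and the bookkeeping for the degenerate subcases of the eliminations there. Once these are in place, the two algebraic identities used in (\ref{C2}) and (\ref{C3}) and the characteristic-$2$ observation on quartics with vanishing cubic term complete the proof.
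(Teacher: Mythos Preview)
Your approach is correct and genuinely different from the paper's. The paper does not prove Lemma~\ref{conic-cplanes-props} directly at all: it simply observes that a conic is a translation oval $\overline\Ooo(2)$ and cites Theorem~\ref{oval-cplanes-props}, whose proof is entirely group-theoretic. There the authors compute the homography group $G$ fixing $\Ooo$ and $\li$, show $|G|=q^2(q^2-1)$ and that $G$ is $2$-transitive on $\C$-points, use orbit--stabiliser to count $\C$-planes and to prove transitivity on affine non-$\C$-points, and handle part~(\ref{C3}) by an explicit coordinate computation (mapping $\PG(2,q)$ onto an arbitrary secant subplane by a matrix $K$ and solving for $\C$-points inside it).

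Your route instead exploits the conic as a normal rational curve: the dictionary $\{\text{\cpl s}\}\leftrightarrow\{\text{subconics through }P_\infty\}\leftrightarrow\{\text{Baer sublines through }\infty\}\leftrightarrow\{\text{lines of }\AG(2,q)\}$ makes parts~(\ref{C1}) and (\ref{C2}) essentially affine-plane statements, while your pencil-of-conics argument for~(\ref{C3}) --- decomposing $F=F_0+\omega F_1$ and analysing the base locus, with the characteristic-$2$ observation that a quartic with vanishing cubic term cannot have exactly three $\GF(q)$-roots --- is a clean algebro-geometric alternative to the paper's brute-force coordinate check. Each approach has its merits: the paper's group-theoretic proof works verbatim for all translation ovals $\overline\Ooo(2^n)$ (which is the point of Theorem~\ref{oval-cplanes-props}), whereas your argument leans on the subconic structure and so is specific to conics, but it is more conceptual and explains \emph{why} the $\C$-planes form an affine plane rather than discovering this by counting.

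One small caution: your dictionary step needs that every $\C$-plane arises from a subconic through $P_\infty$, which you deduce from ``at least five $\C$-points forces a subconic''. For $q>4$ this is fine since a $\C$-plane has $q\ge 8$ $\C$-points; at $q=4$ a $\C$-plane has only four $\C$-points and the deduction does not immediately bite, so you should remark separately that such a plane still contains $P_\infty$ (equivalently, that no secant Baer subplane with $P_\infty\notin\beta$ meets $\C$ in a $q$-arc). The paper's own argument for part~2 of Theorem~\ref{oval-cplanes-props} faces the same boundary issue at $q=4$, so this is not a defect peculiar to your method.
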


Our aim in this article is to work in the Bruck-Bose representation in $\PG(4,q)$, and consider a set of \cpt s and \cpl s that satisfy the properties given in Lemma~\ref{conic-cplanes-props}, and see if we can reconstruct the conic. In fact, it turns out that these geometric properties do not uniquely reconstruct the conic, rather, they determine a translation oval. So we first show that a translation oval satisfies the required geometric properties. 

Recall that a translation oval in $\PG(2,2^h)$  is  projectively equivalent to 
$\overline\Ooo(2^n)=\{ (t^{2^n},t,1)\st  t\in\gfqp\}$ where $(n,h)=1$. Note that $\overline\Ooo(2^n)$ has nucleus $N=(0,1,0)$, and $\overline\Ooo\cup\{N\}$ is a translation hyperoval. Further, if $n=1$, then $\overline\Ooo(2)$ is a non-degenerate conic.

\begin{theorem}\Label{oval-cplanes-props}
Let $\overline\Ooo(2^n)=\{ (t^{2^n},t,1)\st  t\in\gfqp\}$ be a translation oval in $\PG(2,q^2)$, $q=2^h$, $(n,h)=1$, where $\li$ has equation $z=0$. Define \cpt s to be points of $\Ooo=\overline\Ooo(2^n)\setminus\{(1,0,0)\}$. Define \cpl s to be Baer subplanes secant to $\li$ that contain $q$ points of $\Ooo$. Then the following hold.
\begin{enumerate}
\item Each \cpl\ meets $\Ooo$ in a $q$-arc.  
\item Any two distinct \cpt s lie in a unique \cpl.
\item Every affine point is either a \cpt, or lies in exactly one \cpl.
\item Every Baer subplane secant to $\li$ which meets $\Ooo$ in at least three points, either meets $\Ooo$ in exactly four points, or is a \cpl.
\end{enumerate}
\end{theorem}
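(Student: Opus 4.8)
The plan is to work directly with the explicit parametrisation $\overline\Ooo(2^n)=\{(t^{2^n},t,1)\st t\in\gfqp\}$ and to pass between $\PG(2,q^2)$ and the Bruck-Bose picture in $\PG(4,q)$ only when it helps. First I would set up notation: $\Ooo=\overline\Ooo\setminus\{(1,0,0)\}$ consists of the affine points $(t^{2^n},t,1)$ for $t\in\GF(q^2)$, the nucleus is $N=(0,1,0)$ on $\li$, and the tangent to $\overline\Ooo$ at $(t^{2^n},t,1)$ meets $\li$ at $N$ for every $t$, while the tangent at $P_\infty=(1,0,0)$ is $\li$ itself. The key external observation is that any line of $\PG(2,q^2)$ meets the hyperoval $\overline\Ooo\cup\{N\}$ in $0$ or $2$ points; I will use this repeatedly, in particular to count how a secant or tangent of $\overline\Ooo$ meets $\C=\Ooo$.

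For part~(1): a \cpl\ is by definition a Baer subplane $\pi$ secant to $\li$ containing $q$ points of $\Ooo$. Since $\pi\cong\PG(2,q)$, any line of $\pi$ contains at most $q+1$ points, and a line of $\pi$ lying on a line of $\PG(2,q^2)$ secant to $\overline\Ooo$ contains at most $2$ points of $\Ooo$; hence the $q$ points of $\pi\cap\Ooo$ form an arc in $\pi$, which is what (1) asserts. For part~(2): given two distinct \cpt s $A,B$, the line $AB$ of $\PG(2,q^2)$ is a secant of $\overline\Ooo$; the Baer sublines through $A$ and $B$ lying on $AB$ that extend to a full Baer subplane secant to $\li$ meeting $\Ooo$ in a $q$-arc — I would realise these as subconics/sub-translation-ovals of $\overline\Ooo$ and show that exactly one such Baer subplane exists. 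Concretely, a Baer subplane secant to $\li$ that meets $\overline\Ooo$ in $q+1$ points meets it in a subplane-conic or sub-translation-oval, and because $\overline\Ooo(2^n)$ is a translation oval its ``$\GF(q)$-rational'' sub-ovals through two prescribed affine points and through $N$ (to keep the nucleus consistent) are parametrised by a single $\GF(q)$-parameter, giving uniqueness. This is where I would invoke the explicit form: the sub-translation-ovals of $\overline\Ooo(2^n)$ inside Baer subplanes secant to $\li$ are images of $\{(s^{2^n},s,1)\st s\in\GF(q)\cup\{\infty\}\}$ under collineations fixing $\li$ and $N$, and two affine points pin such an image down uniquely.

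For parts~(3) and~(4) I would argue by counting. There are $q^2$ \cpt s. Through a fixed \cpt\ $A$, by part~(2) the \cpl s partition the other $q^2-1$ \cpt s into sets of size $q-1$, so there are $(q^2-1)/(q-1)=q+1$ \cpl s on $A$, and hence $q^2(q+1)/q=q(q+1)$ \cpl s in total. Each \cpl\ has $q+1-q=1$... more carefully, each \cpl\ is a Baer subplane with $q^2+q+1$ points, of which $q$ are \cpt s and some lie on $\li$ (exactly $q+1$ on $\li$ since it is secant), leaving $q^2+q+1-q-(q+1)=q^2-q$ affine non-\cpt\ points per \cpl. Multiplying by $q(q+1)$ \cpl s and dividing by the number of \cpl s through a generic affine non-\cpt\ point (which I expect to be $1$) must recover the $q^4-q^2$ affine non-\cpt\ points; this forces that number to be $1$, giving (3). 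For (4): a Baer subplane $\pi$ secant to $\li$ meeting $\Ooo$ in $k\ge 3$ points; if $k\ge 5$ I must show $\pi$ is a \cpl. The arc $\pi\cap\Ooo$ lies in $\pi\cong\PG(2,q)$; an arc with $\ge 5$ points determines... here I would use that $\overline\Ooo\cap\pi$ is contained in a conic or oval of $\pi$ (since $5$ points of an oval in $\PG(2,q)$, $q$ even, together with their nucleus behaviour, force the containing curve) and then show $\pi\cap\overline\Ooo$ has exactly $q+1$ points, i.e.\ $\pi$ is a \cpl; if $3\le k\le 4$ and $\pi$ is not a \cpl\ I must rule out $k=3$, using that any three collinear-free points of $\Ooo$ together with $N$... in fact if $\pi$ meets $\Ooo$ in exactly $3$ points it also meets $\li$, and the configuration of the three tangents (all through $N$ in $\PG(2,q^2)$) restricted to $\pi$ forces a fourth point of $\Ooo$ in $\pi$, contradiction.

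The main obstacle I anticipate is part~(4), specifically the $k=3$ exclusion and the passage from ``$\ge 5$ points of $\Ooo$ in $\pi$'' to ``$\pi$ is a \cpl'': one must show that a Baer subplane secant to $\li$ cannot meet the translation oval in, say, $5,6,\dots,q-1$ points, which amounts to a rigidity statement that any $\GF(q)$-subset of $\overline\Ooo(2^n)$ of size $\ge 5$ lying in a Baer subplane already spans a full sub-translation-oval. I would handle this by choosing coordinates so that $\pi$ corresponds to $\GF(q)$-rational points after a suitable $\PGammaL$ change, reducing to the statement that five points $(t_i^{2^n},t_i,1)$ with $t_i$ in a common Baer subline of $\GF(q^2)$ force all five $t_i$ into a single $\GF(q)$-subline, which follows from the fact that the map $t\mapsto t^{2^n}$ is additive and that a Baer subline is determined by three of its points.
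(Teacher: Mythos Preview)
Your outline has the right shape, but parts (2), (3) and (4) contain genuine gaps that the paper closes by a different, group-theoretic route.

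For part (2), your uniqueness argument is not an argument: saying that two affine points ``pin down'' a sub-oval uniquely is the assertion, not a proof. The paper's device is that any \cpl\ through $A,B$ must contain both the nucleus $N=(0,1,0)$ and the point $P_\infty=(1,0,0)$ (the $q$-arc in the subplane completes uniquely to a $(q+2)$-arc containing these), and the quadrangle $A,B,N,P_\infty$ lies in a \emph{unique} Baer subplane. That gives ``at most one''; ``at least one'' then follows by counting triples $(A,B,\beta)$ once one knows there are at least $q(q+1)$ \cpl s, which the paper obtains as the orbit of $\PG(2,q)$ under the homography group $G$ fixing $\Ooo$ and $\li$.

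For part (3), your double count only yields that the \emph{average} number of \cpl s through an affine non-\cpt\ equals $1$: the identity $q(q+1)\cdot(q^2-q)=q^4-q^2$ does not preclude some points lying on zero \cpl s and others on two. The paper fills this by computing $|G|=q^2(q^2-1)$ and checking that the stabiliser in $G$ of the non-\cpt\ $(0,1,1)$ is trivial, so $G$ is transitive on the $q^4-q^2$ affine non-\cpt s; the number of \cpl s through such a point is then constant, hence equal to the average $1$.

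For part (4), neither step in your sketch is a proof. The ``three forces four'' step can be made precise (the paper does it: $D=BY\cap XC$ with $X=AB\cap\li$, $Y=AC\cap\li$ is a fourth \cpt\ in any Baer subplane through $A,B,C$). But your ``$\ge 5$ forces a \cpl'' step relies on a rigidity claim --- five points $(t_i^{2^n},t_i,1)$ in a common Baer subplane force the $t_i$ into a Baer subline --- that you neither prove nor reduce to something known; ``a Baer subline is determined by three points'' does not do the work. The paper instead uses the $2$-transitivity of $G$ on \cpt s to normalise $A=(0,0,1)$, $B=(1,1,1)$, $C=(t^{2^n},t,1)$, writes down an explicit homography $\rho$ sending $\PG(2,q)$ onto the Baer subplane $\alpha$ through $A,B,C$, and computes directly which $\rho(x,y,1)$ with $x,y\in\GF(q)$ lie on $\Ooo$. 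The condition reduces to
\[
t^{2^n}\bigl(x+x^{2^n}+y+y^{2^n}\bigr)+\bigl(x+x^{2^n}\bigr)=0.
\]
If $t\notin\GF(q)$ (equivalently $\alpha$ is not the \cpl\ $\PG(2,q)$), then $t^{2^n}\notin\GF(q)$ since $(n,h)=1$, forcing $x+x^{2^n}=0$ and $y+y^{2^n}=0$, i.e.\ $x,y\in\GF(2)$; so $\alpha\cap\Ooo=\{A,B,C,D\}$. This simultaneously handles ``never exactly three'' and ``never strictly between five and $q-1$'', which your approach treats separately and incompletely.
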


\begin{proof} Note that part 1 holds trivially. We begin the proof by
calculating the group $G$ of homographies of $\PG(2,q^2)$ fixing $\Ooo$ and $\li$. Let $\sigma$ 
be a homography that fixes $\Ooo$ and $\li$, so $\sigma$ has matrix
\[
M=\begin{pmatrix}
a &b&c\\d&e&f\\g&h&i
\end{pmatrix}, \quad a,\ldots,i\in\GF(q^2),
\]
and $\sigma(x)=Mx$, where $x$ is the column vector representing the coordinates of a point.
As $\sigma$ fixes $\li$ and $\Ooo$, it fixes $P_\infty=(1,0,0)$ and the nucleus $N=(0,1,0)$. Hence $b=h=d=g=0$. 
An affine point $(x, y, z)$ of $\PG(2,q^2)$ belongs to $\Ooo$ if and only if $xz=y^{2^n}$, so the image of the point $(t^{2^n},t,1)$ is on $\Ooo$ if its co-ordinates satisfy $xz=y^{2^n}$. This gives 
\begin{eqnarray}\label{m-eqn}
M=\begin{pmatrix}
e^{2^n}&0&f^{2^n}\\0&e&f\\0&0&1\end{pmatrix},\quad e,f\in\GF(q^2).
\end{eqnarray}
  The determinant of $M$ is $1\cdot e\cdot e^{2^n}$ so we require $e\ne 0$, but there are no restrictions on $f$.  Thus there are $q^2(q^2-1)$ matrices $M$, so $|G|= q^2(q^2-1)$. 
  
Let $\alpha$ be the Baer subplane $\PG(2,q)$, then the subgroup of $G$ fixing $\alpha$ has matrices with form given in (\ref{m-eqn}), but with  $e,f\in\GF(q)$, hence this subgroup has size $q(q-1)$. Thus by the orbit stabilizer theorem, the orbit of $\alpha$ is of size $q(q+1)$.  
Now $\alpha$ meets $\Ooo$ in a $q$-arc, so is a \cpl, so there are at least $q(q+1)$ \cpl s. 
Now we look at the subgroup $H$ of $G$ that stabilizes the \cpt\  $P=(0,0,1)$. The group $H$ consists of homographies with matrices with form given in (\ref{m-eqn}),  with $f=0$, so $|H|=q^2-1$. Hence by the orbit stabilizer theorem, $P$ has orbit in $G$ of size $q^2$, that is, $G$ is transitive on the $q^2$ \cpt s. Next consider the subgroup $I$ of $H$ fixing the \cpt\  $Q=(1,1,1)$. The group $I$ consists of homographies with matrices with form given in (\ref{m-eqn}), with $f=0$ and $e=1$, so $|I|=1$. Hence by the orbit stabilizer theorem, $G$ is 2-transitive on the \cpt s.

We now show that part 2 holds. Let $A,B$ be any two \cpt s. 
Any $\C$-plane\ containing $A$ and $B$ is a Baer subplane that contains the line $\li$ and a $q$-arc of $\Ooo$, and so contains $N$ and $P_\infty$, the unique completion of the $q$-arc to a $(q+2)$-arc. The four points $A,B,P_\infty,N$ form a quadrangle, and so lie in a unique Baer subplane $\alpha$.  Hence there is at most one \cpl \ through the two \cpt s $A,B$. 
Now count in two ways the triples $(A,B,\beta)$, where $A$ and $B$ are distinct \cpt s in the \cpl\ $\beta$.  Firstly we have $q^2(q^2-1)$ pairs $(A,B)$ with at most one \cpl\ containing them.  Thus the number of triples is at most $q^2(q^2-1)$ with equality if and only if every pair is on a unique \cpl.  Alternatively, we calculated in the first paragraph that there are at least $q(q+1)$  \cpl s $\beta$. Each of these \cpl s contains $q$ \cpt s, so there are at least $q(q+1)\times q(q-1)=q^2(q^2-1)$ triples.  
Hence there are exactly $q^2(q^2-1)$ triples, exactly $q(q+1)$ \cpl s, and any two distinct points lie on exactly one \cpl. Hence part 2 holds.

For part 3, consider the affine point $P= (0, 1, 1)$, it is on the line joining two points  $(1, 0, 0)$ and $(1, 1, 1)$ of the oval $\overline\Ooo$, and and so is not a \cpt.  The image of $P$ under the homography $\sigma$ with matrix $M$ given in (\ref{m-eqn}) is 
$P'=MP=(f^{2^h},e+f,1)$. This is equal to $P$ if and only if $f=0$ and $e=1$.
Hence the only element of $G$ fixing $P$ is the identity, so by the orbit stabilizer theorem, $G$ is transitive on the $q^4-q^2$ affine non-\cpt s of $\PG(2,q^2)$.  As each of the $q^2+q$ \cpl s contains $q^2-q$ affine non-\cpt s, it follows that every affine non-\cpt\ is on exactly one \cpl\ and part 3 is proved.

For part 4, 
let $\alpha$ be a Baer subplane secant to $\li$ that contains three \cpt s $A,B,C$.  As the group $G$ is 2-transitive on the \cpt s, without loss of generality, we may assume the points are $A=(0, 0, 1)$, $B=(1, 1, 1)$ and $C=(t^{2^n}, t, 1)$, where $t\in\GF(q^2)\setminus\{ 0,1\}$. So $\alpha$ contains the three points $X=AB\cap\li=(1, 1, 0)$, $Y=AC\cap\li=(t^{2^n}, t, 0)$ and $Z=BC\cap\li=(t^{2^n}+1, t+1, 0)$ on $\li$. As $\alpha$ is a subplane, it follows that the point $D=BY\cap XC=((t+1)^{2^n}, t+1, 1)$ is in $\alpha$.  Note that $D$ also lies on the line $ZA$. However, $D$ is a \cpt\ which is not equal to $A,B$ or $C$, as $t\ne 0,1$.  Thus if $\alpha$ contains at least three \cpt s, then $\alpha$ contains at least four \cpt s. 

We want to show that either $\alpha$ contains exactly four \cpt s, or $\alpha$ is a \cpl. To do this we find the coordinates of the remaining points of $\alpha$.
The homography $\rho$ with matrix 
$$K=\begin{pmatrix}t^{2^n}+1&t^{2^n}&0\\t+1&t&0\\ 0&0&1 \end{pmatrix}$$
maps $(1, 0, 0),\ (0, 1, 0),\ (0, 0, 1),\ (1, 1, 1)$  to the points $Z,Y,A,B$ respectively. Hence $\rho$ maps the Baer subplane $\PG(2,q)$ to $\alpha$.
By part 2, there is a unique $\C$-plane\ through $A,B$, namely $\PG(2,q)$. 
Note that $\alpha=\PG(2,q)$ if and only if $t\in\GF(q)$ (since $C\in\PG(2,q)$ if and only if $t\in\GF(q)$). Suppose $t\notin\GF(q)$, so $\alpha$ is not a \cpl. We need to show that $\alpha$ contains no further \cpt s. Note that $\rho$ maps points of $\li$ to points of $\li$, so the affine points of $\alpha$ are the image under $\rho$ of affine points of $\PG(2,q)$. An affine point $P=(x,y,1)$, $x,y\in\GF(q)$ of $\PG(2,q)$ maps under $\rho$ to the affine point $P'=KP=
(x(t^{2^n}+1)+yt^{2^n}, \ x(t+1)+yt, \ 1)$ of $\alpha$. So the affine points of $\alpha$ are the points with coordinates of form $P'$ for $x,y\in\GF(q)$. Now $P'$ is a \cpt\ 
 if and only if $x(t^{2^n}+1)+yt^{2^n}=(x(t+1)+yt)^{2^n}$. Rearranging gives $t^{2^n}(x+x^{2^n}+y+y^{2^n})+(x+x^{2^n})=0$. As $t\not\in\GF(q)$, we have $t^{2^n}\notin\GF(q)$ as $(n,h)=1$, and so it follows that $x+x^{2^n}+y+y^{2^n}=0$ and $x+x^{2^n}=0$.  The second equation implies that $x\in\GF(2)$, as $(n,h)=1$. Hence $x=0$ or $1$, and then the first equation implies that $y=0$ or $1$.  Thus the only points of $\alpha$ which are  \cpt s are $\rho(0,0,1)=K(0, 0, 1)^t=A$, $\rho(0, 1, 1)=C$, $\rho(1, 0, 1)=D$ and $\rho(1, 1, 1)=B$.  
 Hence if $\alpha$ is not a \cpl, then it contains exactly four \cpt s. 
That is, if a plane contains more than four \cpt s, it is a \cpl, completing the proof of part 4.
\end{proof}

We briefly discuss the interpretation of these properties in the Bruck-Bose representation of $\PG(2,q^2)$ in $\PG(4,q)$. Recall that Baer subplanes of $\PG(2,q^2)$ secant to $\li$ correspond exactly to affine planes of $\PG(4,q)$ that do not contain a line of the spread. So when we consider \cpl s in $\PG(4,q)$, we are interested in affine planes of $\PG(4,q)$. If $\pi$ is a Baer subplane of $\PG(2,q^2)$ secant to $\li$, and $\K$ is a $k$-arc of $\pi$, then in $\PG(4,q)$, $\K$ is a $k$-arc in the affine plane corresponding to $\pi$. Moreover, if $\K$ is a conic in the Baer subplane $\pi$ of $\PG(2,q^2)$, then $\K$ corresponds to a conic in $\PG(4,q)$ (see \cite{quin02}).

\section{Proof of Theorem~\ref{main-theorem}}

For the remainder of this article, we will assume that $q$ is even, and suppose that in $\PG(4,q)$ we have a set of $q^2$ (affine) \cpt s, and a set of \cpl s that satisfy the conditions (A1-4) of Theorem~\ref{main-theorem}.

The proof of  Theorem~\ref{main-theorem} proceeds as follows. In Section~\ref{section-cplanes} we investigate \cpl s, and show that the \cpl s each meet $\si$ in one of $q+1$ {\em $\C$-lines} which are pairwise skew. In Section~\ref{section-cline}, we study the $\C$-lines and show that there are two disjoint lines $t_N$, $t_\infty$ in $\si$  that meet every $\C$-line. In Section~\ref{section-coord} we use the Klein correspondence to coordinatise the $\C$-lines. The remaining proof of Theorem~\ref{main-theorem} in Section~\ref{main-proof}
begins by coordinatising the \cpt s. Then we construct a regular spread $\S$ containing the two special lines $t_N$, $t_\infty$, so that in the Bruck-Bose plane $\P(\S)\cong\PG(2,q^2)$, the points corresponding to $\C$ and $t_N$ and $t_\infty$ form a translation hyperoval.

\subsection{Properties of  \cpl s}\Label{section-cplanes}

In this section we begin by showing that the \cpt s and \cpl s form an affine plane, and then investigate the parallel classes of this affine plane.

\begin{lemma}\Label{affine-plane}
Consider the incidence structure $\A$ with {\em points} the \cpt s, and {\em lines} the \cpl s, and natural incidence. Then $\A$ is 
 an affine plane  of order $q$.
\end{lemma}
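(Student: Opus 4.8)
The plan is to verify the axioms of an affine plane directly from (A1)--(A4), using a double-counting argument to pin down the number of \cpl s. First I would establish that any two distinct \cpt s lie on a unique \cpl: existence and uniqueness are exactly (A2), so this axiom is immediate, and it also tells us each \cpl\ has exactly $q$ points by (A1). Next I would count flags $(P,\pi)$ with $P$ a \cpt\ on a \cpl\ $\pi$. Counting by pairs of \cpt s: each of the $\binom{q^2}{2}$ pairs lies on exactly one \cpl, and each \cpl\ carries $\binom{q}{2}$ such pairs, so the number of \cpl s is $\binom{q^2}{2}/\binom{q}{2} = q^2(q^2-1)/(q(q-1)) = q(q+1)$. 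Then, fixing a \cpt\ $P$, the \cpl s through $P$ partition the remaining $q^2-1$ \cpt s into blocks of size $q-1$, so there are exactly $q+1$ \cpl s through each \cpt.

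With these counts in hand, the parallelism axiom follows from a standard argument. Given a \cpl\ $\pi$ and a \cpt\ $P\notin\pi$: the \cpl s through $P$ number $q+1$, and those meeting $\pi$ use up $q$ of the points of $\pi$-through-$P$-lines\,---\,more precisely, each point of $\pi$ gives at most one \cpl\ through $P$ meeting $\pi$ there, and two distinct points of $\pi$ cannot give the same \cpl\ through $P$ (else that \cpl\ would share two points, hence a line, with $\pi$, forcing equality by (A4) since a \cpl\ meeting $\C$ in $q\ge 3$ points that also shares two \cpt s with $\pi$ would have to coincide with $\pi$). So exactly $q$ of the $q+1$ \cpl s through $P$ meet $\pi$, leaving a unique \cpl\ through $P$ disjoint from $\pi$. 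Uniqueness of this parallel, plus transitivity of the resulting parallel relation (two \cpl s disjoint from a common third are disjoint from each other, again by the counting), gives the parallel axiom. Finally, non-degeneracy is clear: three \cpt s not all on one \cpl\ exist because a single \cpl\ holds only $q<q^2$ points.

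The main obstacle is the step showing that distinct points of $\pi$ yield distinct \cpl s through $P$\,---\,equivalently, that a \cpl\ cannot meet another \cpl\ in more than one point. This is where (A4) is essential: if two \cpl s $\pi,\pi'$ shared two \cpt s they would share the line through them; but I must argue that $\pi=\pi'$, which requires knowing that the plane spanned by that shared line and a third \cpt\ of $\pi$ (which exists since $q\ge 3$) is forced. The cleanest route is to invoke (A2) directly on the two shared \cpt s: they lie on a \emph{unique} \cpl, so $\pi=\pi'$. So in fact the obstacle dissolves once one is careful to phrase everything in terms of (A2); the role of (A4) here is minor, and the genuine content of the lemma is the flag-count together with the elementary affine-plane axiom-checking. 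I would present the counting lemma as the technical core and treat the axiom verification as routine bookkeeping.
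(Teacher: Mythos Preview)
Your proposal is correct, but it is considerably more elaborate than the paper's own proof, which is a single sentence: from (A1) there are $q$ \cpt s on each \cpl, and from (A2) any two \cpt s lie on a unique \cpl, so $\A$ is a $2$-$(q^2,q,1)$ design; it is a standard fact that any $2$-$(n^2,n,1)$ design is an affine plane of order $n$.

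What you have written is essentially a self-contained proof of that standard fact, carried out in this particular setting. Your flag count and the derivation of $q+1$ blocks through a point are exactly the usual steps, and your verification of the parallel axiom is the standard one. The detour through (A4) in your ``main obstacle'' paragraph is unnecessary, as you yourself observe at the end: (A2) alone forces two \cpl s sharing two \cpt s to coincide, so distinct points of $\pi$ yield distinct \cpl s through $P$. Neither (A3) nor (A4) is used here. If you want to match the paper's economy, simply note that (A1) and (A2) make $\A$ a $2$-$(q^2,q,1)$ design and cite the classical equivalence with affine planes; if you prefer a self-contained argument, your write-up (stripped of the (A4) digression) is fine.
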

\begin{proof}  By assumptions (A1) and (A2), $\A$ is a 2-$(q^2,q,1)$ design, and hence is an affine plane of order $q$. 
\end{proof}

\begin{lemma}\Label{arc}
No three \cpt s are collinear.
\end{lemma}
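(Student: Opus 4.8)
The plan is a very short argument by contradiction that uses only assumptions (A1) and (A2). I would suppose three distinct \cpt s $A,B,C$ lie on a common line $\ell$ of $\PG(4,q)$, and apply (A2) to the distinct \cpt s $A$ and $B$ to obtain the unique \cpl\ $\pi$ containing both. Since a \cpl\ is a genuine plane (a $2$-dimensional subspace) of $\PG(4,q)$, and $\pi$ contains the two distinct points $A,B$ of $\ell$, it must contain all of $\ell$, and in particular $C\in\pi$. Then $A,B,C$ are three collinear points of $\pi\cap\C$, contradicting (A1), since a $q$-arc has no three collinear points.

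I do not expect any real obstacle here: the statement falls out immediately from the definition of an arc together with (A1) and (A2), and neither (A3), (A4), nor the hypothesis $q>2$ is needed. The only point worth a second's thought is that, in the Bruck--Bose setting, a \cpl\ is an affine plane of $\PG(4,q)$, and hence contains the $\PG(4,q)$-line through any two of its points; this is exactly what forces $C\in\pi$. Equivalently, one could run the same argument inside the affine plane $\A$ of Lemma~\ref{affine-plane}: the line of $\A$ joining $A$ and $B$ is precisely the $q$-arc $\pi\cap\C$, so a further \cpt\ on the $\PG(4,q)$-line $AB$ would be forced into that $q$-arc, again contradicting that a $q$-arc contains no three collinear points.
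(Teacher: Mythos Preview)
Your argument is correct and essentially identical to the paper's own proof: assume three collinear \cpt s, use (A2) to get the unique \cpl\ through two of them, observe that this plane contains the line and hence the third point, and derive a contradiction with (A1). The paper likewise uses only (A1) and (A2) here.
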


\begin{proof}
Suppose three \cpt s $A,B,C$ are collinear. By (A2) there is a unique \cpl\ containing $A,B$, this $\C$-plane\ contains the line $AB$, and so contains the point $C$, which contradicts (A1).  Hence is it not possible for three \cpt s to be collinear.
\end{proof}

Consider two distinct \cpl s in $\PG(4,q)$; they either meet in a line $\ell$ or in a point $P$.  
So
there are five possibilities: 
(a) they meet in an infinite line;
(b) they meet in an affine line;
(c) they meet in an infinite point;
(d) they meet in an affine point $P$ that is a \cpt; and
(e) they meet in an affine  point $P$ that is not a \cpt.
Note that as $q>2$, by (A3) cases (b) and (e)
cannot occur. 
We now show  that case (c)
cannot occur either.



\begin{lemma}\Label{no-single-int}
Two \cpl s cannot meet exactly in a point of $\si$.
\end{lemma}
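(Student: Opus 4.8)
The plan is to rule out case (c) by a counting argument based on the affine plane $\A$ from Lemma~\ref{affine-plane}. Suppose for contradiction that two \cpl s $\pi_1$ and $\pi_2$ meet exactly in a point $R\in\si$. In the affine plane $\A$ the lines $\pi_1$ and $\pi_2$ (i.e.\ their sets of \cpt s) are disjoint, so they are parallel; thus $R$ is, in some sense, a ``direction'' of $\A$. I would first observe that every \cpl\ parallel to $\pi_1$ in $\A$ must also pass through $R$: if $\pi_3\parallel\pi_1$ in $\A$ but $\pi_3$ does not contain $R$, then $\pi_1$ and $\pi_3$ are two planes in the $4$-space meeting in at most a point, and that point (if any) is not a \cpt\ and, by the case analysis already done (cases (b), (e) excluded since $q>2$), would have to lie on $\si$; one then has to check this forces a contradiction with (A4) or with the direction structure. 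So the key intermediate claim is: \emph{each parallel class of $\A$ determines a unique point of $\si$ through which all \cpl s of that class pass.}

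Granting that claim, the argument finishes by counting affine points. The $q+1$ parallel classes of $\A$ give $q+1$ (not necessarily distinct) points of $\si$. Fix the class $\mathcal{K}$ of $\pi_1$, consisting of $q$ \cpl s all through $R$. These $q$ planes partition the $q^2$ \cpt s. Consider the affine lines through $R$: there are $q^3$ of them, each carrying $q$ affine points. Each \cpl\ in $\mathcal{K}$ is a plane through $R$, hence is a union of $q$ affine lines through $R$ (together with an infinite line through $R$). I would count incidences between affine \cpt s and affine lines through $R$ lying inside some member of $\mathcal{K}$, and compare with what (A3) (each affine non-\cpt\ on exactly one \cpl) and Lemma~\ref{arc} (no three \cpt s collinear) permit. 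The contradiction should come from the fact that the affine lines through $R$ inside the planes of $\mathcal{K}$ already account for all $q^2$ \cpt s but also for $q^2(q-1)$ affine non-\cpt s that must then lie on \cpl s of $\mathcal{K}$, leaving no room for the \cpt s and non-\cpt s governed by the other parallel classes; alternatively, one finds a plane through $R$ meeting $\C$ in the ``wrong'' number of points, violating (A4).

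An alternative, possibly cleaner route: take $\pi_1,\pi_2$ meeting in $R\in\si$, pick a \cpt\ $A\in\pi_1$ and a \cpt\ $B\in\pi_2$, and look at the plane $\langle R, A, B\rangle$. This plane meets $\pi_1$ in the line $RA$ and $\pi_2$ in the line $RB$, and by Lemma~\ref{arc} each of $RA$, $RB$ contains only the one \cpt\ $A$, resp.\ $B$. If $\langle R,A,B\rangle$ contained a third \cpt\ $C$, then by (A4) it is either a \cpl\ (impossible, as a \cpl\ is a $q$-arc with no three collinear, yet $A$ and $B$ would be joined through... — one checks the line $AB$ versus $R$) or meets $\C$ in exactly four points. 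I would push on this: vary $A$ over the $q$ \cpt s of $\pi_1$ and $B$ over the $q$ \cpt s of $\pi_2$, get $q^2$ planes through $R$, and count how many times each \cpt\ of $\PG(4,q)$ is covered, again colliding with (A3)/(A4).

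The main obstacle I anticipate is the intermediate claim that \emph{all} \cpl s in a parallel class of $\A$ pass through the common infinite point: two parallel lines of $\A$ are disjoint as \cpt -sets, but a priori the corresponding planes of $\PG(4,q)$ could meet in an infinite point, an infinite line, or not at all, and showing it is always the \emph{same} infinite point (so that the class genuinely has one ``direction'') requires handling the transitivity of the parallelism relation through these three geometric possibilities. Once the parallel class is pinned to a single point of $\si$, the final counting contradiction should be routine; the delicate bookkeeping is making sure I use $q>2$ at exactly the point where cases (b) and (e) were excluded, since for $q=2$ a \cpl\ is only a $2$-arc and the whole structure degenerates.
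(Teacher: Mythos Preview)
Your counting route does not reach a contradiction. Even granting the intermediate claim that all $q$ \cpl s in the parallel class $\mathcal{K}$ pass through the single point $R$, the bookkeeping balances perfectly: these $q$ planes are pairwise disjoint on affine points, so they cover $q\cdot q^2=q^3$ affine points, of which $q^2$ are \cpt s and $q^2(q-1)$ are non-\cpt s. With $q+1$ parallel classes and (A3) saying each affine non-\cpt\ lies on exactly one \cpl, the total count is $(q+1)\cdot q^2(q-1)=q^4-q^2$, exactly the number of affine non-\cpt s. Nothing overflows; there is no numerical obstruction to all of this happening, so the argument stalls. Your ``alternative route'' through the plane $\langle R,A,B\rangle$ is closer in spirit but starts from only two \cpt s, so (A4) does not bite.

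The paper's proof avoids both the intermediate claim and any global counting. It works locally and uses a dimension trick. Take $A\in\alpha$ and two \cpt s $B,C\in\beta$ with $X\notin BC$ (possible since $q>2$). The $3$-space $\Sigma=\langle A,B,C,X\rangle$ contains $\beta$. The plane $\pi=\langle A,B,C\rangle$ has three \cpt s, so (A4) supplies a fourth \cpt\ $D\in\pi$, and one checks $D\notin\alpha\cup\beta$. The \cpl\ $\gamma$ through $A$ and $D$ is in a different parallel class from $\alpha$, hence also from $\beta$, so $\gamma\cap\beta$ is a single \cpt\ $E$. But $A,D\in\pi\subset\Sigma$ and $E\in\beta\subset\Sigma$ force $\gamma\subset\Sigma$; now $\gamma$ and $\beta$ are two planes in a common $3$-space meeting in exactly one point, which is impossible. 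That is the missing idea: manufacture a second pair of \cpl s, from \emph{different} parallel classes, trapped in a $3$-space.
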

\begin{proof}
Note that if two \cpl s $\alpha$ and $\beta$ are in the same parallel class of the affine plane $\A$, then they have no affine points in common, and so must meet in a line or a point of $\si$ (that is, case (a) or (c)). If $\alpha$ and $\beta$ are in different parallel classes of $\A$, then as case (b) and (e) cannot occur, they must meet in a point of $\C$.

Suppose two \cpl s $\alpha$ and $\beta$ meet exactly in a point $X\in\si$, so $\alpha$ and $\beta$ are in the same parallel class of $\A$. Let $A$ be any \cpt\ of $\alpha$ and let $B,C$ be two \cpt s of $\beta$ such that the line $BC$ does not contain $X$ (this is possible as $q>2$). Consider the 3-space $\Sigma=\langle A,B,C,X\rangle$, note that $\beta\subset\Sigma$. Consider the plane $\pi=\langle A,B,C\rangle$, it contains three \cpt s and hence by (A4) contains a fourth \cpt\ $D$.  Now $D$ does not belong to $\alpha$ as if so, the two lines $AD$ and $BC$ on the plane $\pi$ must meet, and must do so at $\alpha\cap\beta=X$, a contradiction as $BC$ does not contain $X$. Further, $D$ does not belong to $\beta$ as $\pi$ meets $\beta$ in a line which already contains two \cpt s (and no three \cpt s are collinear by Lemma~\ref{arc}).  Now consider the two \cpt s $A$ and $D$, by (A2) there is a unique \cpl\ $\gamma$ containing $A,D$.  As $\gamma$ meets $\alpha$ in the \cpt\  $A$, and $\alpha$ and $\beta$ are in the same parallel class, $\gamma$ meets $\beta$ in a \cpt\ $E$, say. Now $A,D\in\pi\subset \Sigma$ and $E\in\beta\subset\Sigma$ so $\gamma\subset\Sigma$.  So the 3-space $\Sigma$ contains two \cpl s $\beta$ and $\gamma$. However, $\gamma,\beta$ are in different parallel classes of $\A$, and so meet exactly in the \cpt\ $E$. Hence $\gamma,\beta$ cannot lie in a 3-space. So no two \cpl s meet exactly at a point in $\si$.
\end{proof}

Hence, \cpl s can meet in one of two ways, and so in the affine plane $\A$ defined in Lemma~\ref{affine-plane}, we have the following properties about parallel classes. 
\begin{lemma}\Label{parallel-class} 
\begin{enumerate}
\item Two \cpl s lie in the same parallel class of $\A$ if and only if their intersection is a line in $\si$.
\item Two \cpl s lie in distinct parallel classes of $\A$ if and only if their intersection is exactly a point of $\C$.
\end{enumerate}
\end{lemma}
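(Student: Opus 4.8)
The plan is to deduce this lemma directly from Lemma~\ref{no-single-int} together with the case analysis of plane intersections in $\PG(4,q)$ carried out just before it. Recall that two distinct \cpl s meet either in a line or in a point of $\PG(4,q)$, and that of the five a~priori possibilities (a)--(e), cases (b) and (e) are excluded by (A3) since $q>2$, and case (c) is excluded by Lemma~\ref{no-single-int}. Hence two distinct \cpl s meet \emph{either} in a line contained in $\si$ (case (a)) \emph{or} in a single \cpt\ (case (d)), and exactly one of these two situations occurs. I would then match this geometric dichotomy with the affine-plane dichotomy ``same parallel class'' versus ``distinct parallel classes'' of $\A$.

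For the first statement I would argue both implications. If $\alpha$ and $\beta$ are \cpl s in the same parallel class of $\A$, then by definition of parallelism in an affine plane they share no point of $\A$, i.e.\ no \cpt; so their intersection cannot be case (d), and therefore it is a line contained in $\si$. Conversely, if $\alpha\cap\beta$ is a line of $\si$, then $\alpha$ and $\beta$ have no common affine point, in particular no common \cpt, so as lines of the affine plane $\A$ they are parallel, hence in the same parallel class.

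For the second statement I would again argue both directions, this time also pinning down the intersection point explicitly. If $\alpha$ and $\beta$ lie in distinct parallel classes of $\A$, then as distinct non-parallel lines of an affine plane they meet in exactly one \cpt\ $P$; since $P$ is an affine point, the intersection $\alpha\cap\beta$ of the two planes is not contained in $\si$, so by the dichotomy above it must be exactly the point $P$. Conversely, if $\alpha\cap\beta$ is a single \cpt\ $P$, then $P$ is a common point of $\alpha$ and $\beta$ in the incidence structure $\A$, so $\alpha$ and $\beta$ are not parallel, i.e.\ they lie in distinct parallel classes.

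I do not expect a substantial obstacle here: all the genuine content sits in Lemma~\ref{no-single-int}, and the present lemma is essentially a repackaging of the intersection dichotomy in the language of the affine plane $\A$. The only thing requiring a little care is keeping the two dichotomies aligned, which works because a \cpt\ is by definition an affine point while a line lying in $\si$ contains no affine point, so ``intersection contained in $\si$'' and ``intersection a single \cpt'' are genuinely complementary for a pair of distinct \cpl s.
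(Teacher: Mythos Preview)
Your proposal is correct and follows exactly the paper's approach: the paper treats this lemma as an immediate consequence of the intersection dichotomy (only cases (a) and (d) survive after excluding (b), (e) via (A3) and (c) via Lemma~\ref{no-single-int}), and you have simply spelled out both implications of each equivalence in more detail than the paper bothers to.
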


As a direct consequence, the parallel classes of $\A$ allow us to define a set of lines in $\si$ that lie on \cpl s.

\begin{corollary}\Label{def-cline}
 Each parallel class of \cpl s meets $\si$ in a common line, called a $\C$-line. There are $q+1$ distinct $\C$-lines, and they are pairwise skew.
\end{corollary}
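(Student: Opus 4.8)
The plan is to use the elementary fact that a plane of $\PG(4,q)$ not contained in the hyperplane $\si$ meets $\si$ in a single line. First I would fix a parallel class $\mathcal{P}$ of the affine plane $\A$ of Lemma~\ref{affine-plane}. For a \cpl\ $\alpha\in\mathcal{P}$, assumption (A1) gives that $\alpha$ contains $q$ points of $\C$ (which are affine), so $\alpha\not\subseteq\si$, and hence $m_\alpha=\alpha\cap\si$ is a line. If $\beta\in\mathcal{P}$ is a second \cpl, then by Lemma~\ref{parallel-class}(1) the intersection $\alpha\cap\beta$ is a line contained in $\si$; since it also lies in $\alpha$, it is contained in --- hence equal to --- the line $m_\alpha=\alpha\cap\si$, and by symmetry equal to $m_\beta=\beta\cap\si$. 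Therefore $m_\alpha=m_\beta$ for all $\alpha,\beta\in\mathcal{P}$: every \cpl\ of $\mathcal{P}$ meets $\si$ in one common line, which we take as the definition of the \cln\ of $\mathcal{P}$.

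Since $\A$ is an affine plane of order $q$, it has exactly $q+1$ parallel classes, so this produces $q+1$ $\C$-lines. To see they are distinct, suppose two parallel classes $\mathcal{P}\ne\mathcal{P}'$ gave the same \cln\ $m$; picking $\alpha\in\mathcal{P}$ and $\beta\in\mathcal{P}'$, we would have $m\subseteq\alpha\cap\beta$, so $\alpha$ and $\beta$ share the line $m$ --- contradicting Lemma~\ref{parallel-class}(2), which forces two \cpl s in distinct parallel classes to meet in exactly a point of $\C$. Finally, for pairwise skewness, let $m,m'$ be the $\C$-lines of distinct parallel classes, realised as $m=\alpha\cap\si$ and $m'=\beta\cap\si$ with $\alpha\in\mathcal{P}$, $\beta\in\mathcal{P}'$. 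If $m$ and $m'$ shared a point $Q$, then $Q\in\alpha\cap\beta$; but $Q\in\si$, whereas $\alpha\cap\beta$ is exactly a point of $\C$ and hence affine --- a contradiction. So $m\cap m'=\emptyset$, and the $q+1$ $\C$-lines are pairwise skew.

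I do not expect any serious obstacle: the corollary is essentially a formal consequence of Lemma~\ref{parallel-class} together with the dimension fact that a plane meets the hyperplane $\si$ in a line. The only point that needs a little care is the first step, namely checking that all the \cpl s of a given parallel class meet $\si$ in the \emph{same} line rather than in an a priori varying family of lines; this is exactly what the containment $\alpha\cap\beta\subseteq\alpha\cap\si$ provides.
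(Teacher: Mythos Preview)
Your argument is correct and is exactly the approach the paper takes: the corollary is stated there as ``a direct consequence'' of Lemma~\ref{parallel-class}, and you have simply written out the details of that deduction. Nothing further is needed.
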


\subsection{Properties of \cln s}\Label{section-cline}

In this section we investigate properties of the $\C$-lines in $\si$ defined in Corollary~\ref{def-cline}. In particular, we show that we can construct two lines $t_N, t_\infty$ that meet every $\C$-line.

Let $\alpha$ be a \cpl, then by (A1) the set ${\mathcal Q}=\alpha\cap\C$ is a $q$-arc. As $q>2$, ${\mathcal Q}$ is contained in a unique oval of $\alpha$, and hence a unique hyperoval ${\mathcal Q}^+$, see \cite{tall57}. The two points of ${\mathcal Q}^+\setminus {\mathcal Q}$ are called {\em \ppt s} (we use this name as we will show in Corollary~\ref{inf-pts-in-si} that these two points lie in the hyperplane at infinity $\si$). The line of the $\C$-plane $\alpha$ joining the two \ppt s is called the {\em \pln}\ of $\alpha$ (again, we will show that this line is in $\si$). 
Note that the \pln\ contains no \cpt s, otherwise this would contradict ${\mathcal Q}^+$ being a hyperoval. 
We also need notation for the $q-1$ points of the \pln\ of $\alpha$ that are not \ppt s; these points are called {\em \spt s}. It is straightforward to verify the following lemma.

\begin{lemma}\Label{arc-complete}
Let $\alpha$ be a \cpl, and let ${\mathcal Q}=\alpha\cap\C$.
Let $X$ be of a point of $\alpha$, with $X\notin{\mathcal Q}$.  Then
\begin{enumerate}
\item If $X$ is a \ppt,\ then the lines of $\alpha$ through $X$ are $q$ $1$-secants and one $0$-secant of ${\mathcal Q}$.
\item If $X$ is a \spt,\ then the lines of $\alpha$ through $X$ are the \pln, and $\frac q2$ $2$-secants, and $\frac q2$ $0$-secants of ${\mathcal Q}$. 
\item If $X$ is not on the \pln, then the lines of $\alpha$ through $X$ are $(\frac q2 -1)$ $2$-secants, two $1$-secants (through the \ppt s) and $\frac q2$ $0$-secants of ${\mathcal Q}$.
\end{enumerate}
\end{lemma}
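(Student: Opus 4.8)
The plan is to pass to the hyperoval $\mathcal{Q}^{+}$ and reduce everything to the standard line-type count for a $(q+2)$-arc in $\PG(2,q)$, $q$ even. Write $\mathcal{Q}^{+}=\mathcal{Q}\cup\{P,N\}$, where $P,N$ are the two \ppt s, so the \pln\ is the line $PN$. Three elementary facts will do all the work. First, since $\mathcal{Q}^{+}$ is a $(q+2)$-arc, every line of $\alpha$ meets $\mathcal{Q}^{+}$ in $0$ or $2$ points; in particular the \pln\ contains no point of $\mathcal{Q}$, so every point of $\alpha\setminus\mathcal{Q}$ is a \ppt, a \spt, or a point not on the \pln, and these three cases are exhaustive and pairwise disjoint. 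Second, counting in two ways the incidences between the $q+1$ lines through a point $X\notin\mathcal{Q}^{+}$ and the $q+2$ points of $\mathcal{Q}^{+}$ shows that exactly $\frac{q}{2}+1$ of these lines are $2$-secants of $\mathcal{Q}^{+}$ and $\frac{q}{2}$ are $0$-secants. Third, a $2$-secant of $\mathcal{Q}^{+}$ is a $2$-, $1$-, or $0$-secant of $\mathcal{Q}$ according as it contains $0$, $1$, or $2$ of the points $P,N$, a $0$-secant of $\mathcal{Q}^{+}$ is a $0$-secant of $\mathcal{Q}$, and $PN$ is the only line of $\alpha$ through both $P$ and $N$.

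With these in hand the three parts follow by inspection. If $X=P$, the argument for $X=N$ being identical, then the \pln\ $PN$ is a $0$-secant of $\mathcal{Q}$, while each of the remaining $q$ lines through $P$ is a $2$-secant of $\mathcal{Q}^{+}$ and so meets $\mathcal{Q}^{+}$ in $P$ and exactly one further point, which lies in $\mathcal{Q}$ because the line is not $PN$; thus these are $q$ $1$-secants of $\mathcal{Q}$, giving part~1. If $X$ is a \spt, then $X$ lies on $PN$ but $X\notin\mathcal{Q}^{+}$, so by the second fact $X$ lies on $\frac{q}{2}+1$ $2$-secants and $\frac{q}{2}$ $0$-secants of $\mathcal{Q}^{+}$; one of the $2$-secants is the \pln\ $PN$ itself, and every other line through $X$ meets $PN$ only in $X$ and hence contains neither $P$ nor $N$, so the remaining $\frac{q}{2}$ $2$-secants of $\mathcal{Q}^{+}$ are $2$-secants of $\mathcal{Q}$ and all $\frac{q}{2}$ $0$-secants of $\mathcal{Q}^{+}$ are $0$-secants of $\mathcal{Q}$, giving part~2. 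Finally, if $X$ does not lie on the \pln\ and $X\notin\mathcal{Q}$, then $X\notin\mathcal{Q}^{+}$, so again $X$ lies on $\frac{q}{2}+1$ $2$-secants and $\frac{q}{2}$ $0$-secants of $\mathcal{Q}^{+}$; the lines $XP$ and $XN$ are distinct, since $X\notin PN$, and each is a $2$-secant of $\mathcal{Q}^{+}$ containing exactly one of $P,N$, hence a $1$-secant of $\mathcal{Q}$; the remaining $\frac{q}{2}-1$ $2$-secants of $\mathcal{Q}^{+}$ avoid $\{P,N\}$ and so are $2$-secants of $\mathcal{Q}$; and the $\frac{q}{2}$ $0$-secants of $\mathcal{Q}^{+}$ are $0$-secants of $\mathcal{Q}$, giving part~3.

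I do not expect a genuine obstacle here: the entire argument is the routine secant-count for a hyperoval in $\PG(2,q)$, combined with the remark that passing from $\mathcal{Q}^{+}$ to the $q$-arc $\mathcal{Q}$ drops the secant multiplicity of a line by the number of the points $P,N$ it contains. The only point needing a little care is that $PN$ is the unique line of $\alpha$ through both \ppt s, which is what makes the three types of point behave differently and, via the first fact, what guarantees that they genuinely partition $\alpha\setminus\mathcal{Q}$.
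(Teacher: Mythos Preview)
Your argument is correct. The paper does not actually give a proof of this lemma: it simply remarks that ``it is straightforward to verify the following lemma'' and states it without justification, so your write-up supplies exactly the routine hyperoval secant-count that the authors are tacitly invoking.
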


We now investigate the \ppt s and will show they all lie in $\si$.

\begin{lemma}\Label{plus-pts-coincide}
If two \cpl s meet in a line, then their \ppt s coincide, and lie in $\si$.
\end{lemma}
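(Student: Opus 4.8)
The plan is to first identify the intersection line and then locate the \ppt s on it.

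If two \cpl s $\alpha,\beta$ meet in a line then, by Lemma~\ref{parallel-class} and Corollary~\ref{def-cline}, they lie in a common parallel class of $\A$ and meet exactly in its $\C$-line $\ell$, which is contained in $\si$ and contains no \cpt. Write ${\mathcal Q}_\alpha=\alpha\cap\C$, with hyperoval ${\mathcal Q}_\alpha^+={\mathcal Q}_\alpha\cup\{U_1,U_2\}$ and \pln\ $n_\alpha=U_1U_2$, and similarly ${\mathcal Q}_\beta$, $n_\beta$. Since $q$ is even and $\ell$ is a line of the plane $\alpha$ meeting the hyperoval ${\mathcal Q}_\alpha^+$, we have $|\ell\cap{\mathcal Q}_\alpha^+|\in\{0,2\}$; as $\ell$ has no \cpt s, if this number is $2$ then $\ell\cap{\mathcal Q}_\alpha^+=\{U_1,U_2\}$, so $n_\alpha=\ell\subseteq\si$ and the \ppt s of $\alpha$ lie in $\si$. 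The heart of the proof is therefore to rule out the case $\ell\cap{\mathcal Q}_\alpha^+=\emptyset$.

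So suppose $\ell\cap{\mathcal Q}_\alpha^+=\emptyset$. Then $U_1$ is an affine point of $\alpha$ off $\ell=\alpha\cap\si$, and since $U_1\notin\C$, by (A3) $\alpha$ is the only \cpl\ through $U_1$. First observe that $U_1$ lies on no $2$-secant of $\C$: if $Z_1,Z_2$ were distinct \cpt s collinear with $U_1$, then by (A2) the \cpl\ through $Z_1,Z_2$ would contain $U_1$, hence equal $\alpha$ by (A3), making $U_1$ a point of the $2$-secant $Z_1Z_2$ of ${\mathcal Q}_\alpha$ and contradicting part~1 of Lemma~\ref{arc-complete}. Now fix a second \cpl\ $\beta$ of the parallel class and set $\Sigma=\langle\alpha,\beta\rangle$, a $3$-space containing $U_1$ with $\Sigma\cap\si$ a plane through $\ell$. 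Every \cpt\ of $\Sigma$ lies in the \cpl\ $\langle\ell,\text{that point}\rangle$, which is then contained in $\Sigma$; and no \cpl\ of a different parallel class can lie in $\Sigma$, since two planes of a $3$-space meet in a line whereas \cpl s of distinct classes meet in a \cpt\ (Lemma~\ref{parallel-class}). Choosing $X\in\ell$ with $X\notin n_\alpha$ and $X$ not a \ppt\ of $\beta$, and a $2$-secant $B_1B_2$ of ${\mathcal Q}_\beta$ through $X$, the plane $\pi=\langle U_1,B_1,B_2\rangle$ lies in $\Sigma$, so $\pi\cap\alpha$ is a line through $U_1$; being a line of $\alpha$ through the \ppt\ $U_1$ and the point $X\notin n_\alpha$, it is a tangent to ${\mathcal Q}_\alpha$, touching at some $A\in{\mathcal Q}_\alpha$. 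Thus $\pi$ contains the three \cpt s $A,B_1,B_2$; as $\pi$ cannot be a \cpl\ through $U_1$, by (A4) it has exactly one further \cpt\ $D$, and $D$ lies in neither $\alpha$ nor $\beta$, so $\Sigma$ contains a third \cpl\ $\gamma$ of the parallel class. A further round of this argument with $B_1\in{\mathcal Q}_\beta$ and $B_2\in{\mathcal Q}_\gamma$, tracking the chain of \cpt s forced inside $\Sigma$ by repeated application of (A4), eventually places two \cpt s on a line through $U_1$, contradicting the first observation. This last reduction is the main obstacle, and is where most of the work lies.

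Once $n_\alpha=\ell$, the same argument with the roles of $\alpha$ and $\beta$ interchanged gives $n_\beta=\ell$, so the \ppt s of both planes lie on $\ell\subseteq\si$. It remains to see that the two pairs coincide. By Lemma~\ref{arc-complete}, a point $X$ of $\ell$ is a \ppt\ of $\alpha$ precisely when no $2$-secant of ${\mathcal Q}_\alpha$ passes through it, and likewise for $\beta$. If $X$ lies on a $2$-secant $A_1A_2$ of ${\mathcal Q}_\alpha$, then for any \cpt\ $B\in{\mathcal Q}_\beta$ the plane $\langle A_1,A_2,B\rangle$ meets $\beta$ in the line $BX$ and contains the three \cpt s $A_1,A_2,B$; applying (A4) and chasing the resulting fourth \cpt\ as in the previous step forces $BX$ to be a $2$-secant of ${\mathcal Q}_\beta$, so $X$ is not a \ppt\ of $\beta$. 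By symmetry, $X$ is a \ppt\ of $\alpha$ if and only if it is a \ppt\ of $\beta$, and hence the \ppt s of $\alpha$ and $\beta$ are the same two points of $\ell$.
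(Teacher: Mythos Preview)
Your proposal contains a genuine gap that you yourself flag: after producing the third \cpl\ $\gamma\subset\Sigma$ in the same parallel class as $\alpha,\beta$, you write that ``a further round \ldots\ eventually places two \cpt s on a line through $U_1$'' and concede that ``this last reduction is the main obstacle, and is where most of the work lies.'' That work is not done; no iteration is carried out, and there is no argument that the chain terminates in the promised configuration. The same hand-wave reappears in your second step (``chasing the resulting fourth \cpt\ \ldots\ forces $BX$ to be a $2$-secant''). As it stands, the proof is incomplete.

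There is in fact a one-line finish available at exactly the point where you stop. Once $\alpha,\beta,\gamma$ are three \cpl s of the same parallel class inside a common $3$-space $\Sigma$, pick any $A'\in\alpha\cap\C$ and $B'\in\beta\cap\C$; the line $A'B'$ lies in $\Sigma$ and meets $\gamma$ in an affine point $C'$ (it cannot meet $\ell$, else $B'\in\alpha$). By Lemma~\ref{arc}, $C'\notin\C$, yet $C'$ lies on the two distinct \cpl s $\gamma$ and $\langle A',B'\rangle_\C$, contradicting (A3). This closes both of your steps simultaneously.

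The paper's own proof is shorter and avoids the two-step decomposition altogether. It does not try to show $n_\alpha=\ell$ first; instead it assumes directly that the \ppt s of $\alpha$ and $\beta$ do not coincide, builds a single plane $\pi$ through a $1$-secant in $\alpha$ and a $2$-secant in $\beta$ meeting at a common point $X\in t$, applies (A4) once to get a fourth \cpt\ $D$, and then takes the \cpl\ $\gamma$ through $A$ and $D$. The key difference is that this $\gamma$ lies in a \emph{different} parallel class from $\alpha,\beta$; hence $\gamma$ meets $\beta$ in a single \cpt\ $E$, yet both $\gamma$ and $\beta$ are forced into the $3$-space $\langle\pi,E\rangle$, where two planes must meet in a line --- an immediate contradiction. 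Once the \ppt s coincide they lie in $\alpha\cap\beta=t\subset\si$, so no separate argument that $n_\alpha=\ell$ is needed.
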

\begin{proof}
Let $\alpha,\beta$ be two \cpl s that meet in a line $t$. By Lemma~\ref{parallel-class}, $t$ is in $\si$.
 Suppose that $\alpha$ and $\beta$ do not share \ppt s.  
 We construct a 1-secant $\ell$ of $\C$ in $\alpha$ and a 2-secant $m$ in $\beta$ as follows. 
 Firstly, if at least one plane, $\alpha$ say, has \ppt s in $\si$, then let $X$ be an \ppt\ of $\alpha$ which is not an \ppt\ of $\beta$, and let $\ell $ be a 1-secant of $\C\cap\alpha$ through $X$. Secondly, if neither $\alpha$ nor $\beta$ has an \ppt\ in $\si$, let $X$ be any point of $\alpha\cap\beta$ not on the \pln s of $\alpha$ or $\beta$.  By Lemma~\ref{arc-complete}, we can join $X$ to an \ppt\ of $\alpha$ to form a 1-secant $\ell$.  In both cases, as $q>2$, again by Lemma~\ref{arc-complete}, there exists a 2-secant $m$ in $\beta$ through $X$.  Let $A$ be the \cpt\ on $\ell$ and let $B$ and $C$ be the \cpt s on $m$.  

Consider the plane  $\pi=\langle A,B,C\rangle$; it meets $\C$ in three points, so by assumption (A4) contains a further \cpt\ $D$. Note that $\pi\cap\beta=m$ is a 2-secant of $\C$, and $\pi\cap\alpha=\ell$ is a 1-secant, so $D$ does not belong to $\alpha$ or $\beta$. By (A2), there is a \cpl\ $\gamma$ containing $A$ and $D$. As $\gamma$ meets $\alpha$ in a \cpt, and  $\alpha$ and $\beta$ belong to the same parallel class,  $\gamma$ also meets $\beta$ in a \cpt\  $E$. 
Consider the 3-space $\Sigma=\langle \pi,E\rangle$. Now $A,B,E$ are three non-collinear points in $\Sigma\cap\gamma$, so $\gamma\subset\Sigma$. Similarly, $B,C,E$ are three non-collinear points in $\Sigma\cap\beta$, so $\beta\subset\Sigma$. But $\beta,\gamma$ have a common \cpt\  $E$, so by Lemma~\ref{parallel-class} they meet in exactly $E$ and so cannot lie in a common 3-space, a contradiction. Thus $\alpha$ and $\beta$ must have the same \ppt s, and so their \ppt s lie in $\si$, and hence their \pln s lie in $\si$. 
\end{proof}

By Corollary~\ref{def-cline}, \cpl s in the same parallel class meet in a line of $\si$, hence it follows that all the \ppt s lie in $\si$, and the \pln s are $\C$-lines. 

\begin{corollary}\Label{inf-pts-in-si} All the \ppt s and \pln s lie in $\Sigma_\infty$.
\end{corollary}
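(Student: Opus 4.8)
The plan is to deduce this directly from Lemma~\ref{plus-pts-coincide}, which has already done the substantive work: it states that whenever two \cpl s meet in a line, their \ppt s coincide and lie in $\si$. So all that remains is the observation that \emph{every} \cpl\ meets some other \cpl\ in a line, after which the conclusion propagates to every \cpl.

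Concretely, first I would fix an arbitrary \cpl\ $\alpha$. By Lemma~\ref{affine-plane} the \cpl s are the lines of an affine plane $\A$ of order $q$, and since $q>2$ the parallel class of $\A$ containing $\alpha$ has $q>2$ members; so I may pick a \cpl\ $\beta\ne\alpha$ parallel to $\alpha$. By Lemma~\ref{parallel-class} (equivalently, Corollary~\ref{def-cline}), $\alpha$ and $\beta$ meet in a line of $\si$, so Lemma~\ref{plus-pts-coincide} applies and the two \ppt s of $\alpha$ lie in $\si$. Consequently the \pln\ of $\alpha$, being the line of $\alpha$ through its two \ppt s, is a line lying both in $\alpha$ and in the hyperplane $\si$; as $\alpha$ is not contained in $\si$, this forces the \pln\ of $\alpha$ to equal the unique line $\alpha\cap\si$, which by Corollary~\ref{def-cline} is the $\C$-line of the parallel class of $\alpha$. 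Since $\alpha$ was an arbitrary \cpl, all \ppt s lie in $\si$ and all \pln s are $\C$-lines, hence lie in $\si$.

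I do not expect any real obstacle here: the hard part was absorbed into Lemma~\ref{plus-pts-coincide}, and the present statement is essentially bookkeeping. The one point deserving a line of justification is that the parallel class of $\alpha$ is not a singleton, i.e.\ that $\A$ has order at least $2$, which is exactly where the hypothesis $q>2$ (indeed $q\ge2$ would suffice) enters; without a second \cpl\ parallel to $\alpha$ we could not invoke Lemma~\ref{plus-pts-coincide} for $\alpha$ at all.
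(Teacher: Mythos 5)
Your argument is correct and is essentially the paper's own: the paper deduces the corollary in one line from the fact (Corollary~\ref{def-cline}) that \cpl s in a common parallel class meet in a line of $\si$, combined with Lemma~\ref{plus-pts-coincide}, exactly as you do. Your extra remark identifying the \pln\ of $\alpha$ with $\alpha\cap\si$, i.e.\ with the $\C$-line of its parallel class, is also the intended reading of the paper's phrase ``the \pln s are $\C$-lines.''
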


We now investigate lines of $\si$ that meet two $\C$-lines.

\begin{lemma}\Label{three-spts}
Any line which meets two \cln s in \spt s meets exactly one other \cln, and meets it  in a \spt.
\end{lemma}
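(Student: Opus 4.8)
The plan is to realise $\ell$ as the diagonal line of a $4$-arc of $\C$ lying in a plane of $\PG(4,q)$, and then to show the only \cln s met by $\ell$ are the three coming from the pairs of opposite sides of that $4$-arc. First, say $\ell$ meets \cln s $r_1,r_2$ in \spt s $S_1,S_2$, and choose \cpl s $\alpha,\beta$ with $\C$-lines $r_1,r_2$; by Lemma~\ref{parallel-class} they meet in a \cpt\ $A$. Since $S_1$ is a \spt\ of $\alpha$, Lemma~\ref{arc-complete} forces $AS_1$ to be a $2$-secant of $\alpha\cap\C$, say $AS_1\cap\C=\{A,B\}$, and similarly $AS_2\cap\C=\{A,C\}$ in $\beta$. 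As $\alpha\cap\beta=\{A\}$ the points $A,B,C$ are not collinear, so (noting $S_1\in AB$, $S_2\in AC$) $\pi:=\langle A,B,C\rangle=\langle A,\ell\rangle$ is a plane with $\pi\cap\si=\ell$. If $\pi$ were a \cpl\ then $\ell$ would be a \cln\ through $S_1\in r_1$, forcing $\ell=r_1$ and $r_1=r_2$; so $\pi$ is not a \cpl, and by (A4) and Lemma~\ref{arc}, $\pi\cap\C$ is a $4$-arc $\{A,B,C,D\}$.

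Next I would show $\ell$ is the diagonal line of the quadrangle $ABCD$. If the opposite sides $AB,CD$ were not parallel in $\pi$ they would meet at an affine point $E$, which is not a \cpt\ (it lies on the $2$-secant $AB$ and differs from $A,B$); by (A3), $E$ lies on a unique \cpl. But $E\in AB\subseteq\alpha$ and $E\in CD\subseteq\delta$, where $\delta$ is the \cpl\ through $C,D$, forcing $\alpha=\delta$ and $C\in\alpha$ --- impossible. So $AB,CD$ meet $\ell$ at the common point $S_1$; likewise $AC,BD$ meet $\ell$ at $S_2$; and as $q$ is even the third pair $AD,BC$ is then parallel too, meeting $\ell$ at a point $Z$. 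Thus the six sides of $ABCD$ meet $\ell$ only at $S_1,S_2,Z$. Each side is a $2$-secant inside a \cpl, and by Lemma~\ref{arc-complete} a $2$-secant cannot pass through an \ppt, so it meets its \cpl's \pln\ in a \spt; hence $Z$ is a \spt\ of the \cln\ $r_3$ of the \cpl\ $\gamma$ through $B,C$, with $r_3\ne r_1,r_2$ (for instance $\alpha$ and $\gamma$ share the \cpt\ $B$, so lie in different parallel classes). So $\ell$ meets the three \cln s $r_1,r_2,r_3$, each in a \spt.

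The main difficulty is to rule out any fourth \cln. Suppose $\ell$ meets a \cln\ $r$ at $W\notin\{S_1,S_2,Z\}$, so $r\notin\{r_1,r_2,r_3\}$. The \cpl s through $A$ meet $\si$ in exactly the $q+1$ \cln s, so $W$ lies on the \cpl\ $\eta$ through $A$ with $\C$-line $r$, giving $AW\subseteq\eta\cap\pi$. Since $W\ne S_1,S_2,Z$, the line $AW$ is none of the sides of $ABCD$, so $AW\cap\C=\{A\}$: a $1$-secant of $\eta\cap\C$, whence (Lemma~\ref{arc-complete}) $W$ is an \ppt\ of $\eta$. Let $\eta'$ be the \cpl\ through $B$ with $\C$-line $r$; then $\eta\ne\eta'$ (otherwise $AB\subseteq\eta$ and $S_1$ would lie on $r\ne r_1$, against Corollary~\ref{def-cline}), so $\eta,\eta'$ meet in the line $r$ and $\Sigma:=\langle\eta,\eta'\rangle$ is a $3$-space. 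It contains $AW\subseteq\eta$ and $BW\subseteq\eta'$, hence the plane $\pi=\langle A,B,W\rangle$, and hence --- being spanned by $r$ together with points of $\pi$ --- the \cpl s through $C$ and through $D$ with $\C$-line $r$. These four distinct \cpl s of $r$'s parallel class lie in $\Sigma$ and meet pairwise in $r$, which contains no \cpt, so they contribute $4q$ distinct \cpt s, i.e. $|\C\cap\Sigma|\ge4q$. On the other hand no \cpl\ of $r_1$'s parallel class lies in $\Sigma$ (else $r_1$ and $r$ would both lie in the plane $\si\cap\Sigma$ --- a plane, as $A\in\Sigma\setminus\si$ --- and hence meet), so each of the $q$ \cpl s of that class meets $\Sigma$ in at most a line, hence in at most two \cpt s; as these \cpl s partition $\C$, $|\C\cap\Sigma|\le2q$, a contradiction. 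Therefore $r\in\{r_1,r_2,r_3\}$, so $\ell$ meets exactly the three \cln s $r_1,r_2,r_3$, the one besides $r_1,r_2$ being $r_3$, met in the \spt\ $Z$.
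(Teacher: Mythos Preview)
Your proof is correct. The first half — constructing the plane $\pi$ with $\pi\cap\si=\ell$ containing a $4$-arc $\{A,B,C,D\}$ whose diagonal points $S_1,S_2,Z$ land on three distinct \cln s — follows the same underlying idea as the paper, though you organise the $4$-arc slightly differently (you start from the single \cpt\ $A=\alpha\cap\beta$ and push out along $AS_1$, $AS_2$, whereas the paper starts from a $2$-secant through a \spt).

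The genuine difference is in how you exclude a fourth \cln\ $r$. The paper argues \emph{locally}: it looks at the point $E=BC\cap AW$ in $\pi$, checks that $E$ is affine and distinct from $A,B,C,D$, and then observes that either $E$ is a fifth \cpt\ of $\pi$ (contradicting (A4)) or $E$ is a non-\cpt\ lying on the two distinct \cpl s $\gamma$ and $\eta$ (contradicting (A3)). You argue \emph{globally}: you span the $3$-space $\Sigma=\langle\eta,\eta'\rangle$, note it contains $\pi$ and hence all four \cpl s of the $r$-parallel-class through $A,B,C,D$, and then count $|\C\cap\Sigma|$ two ways (at least $4q$ via the $r$-class, at most $2q$ via the $r_1$-class since $r_1$ and $r$ would otherwise meet in the plane $\Sigma\cap\si$). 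Both routes work; the paper's is shorter and uses only (A3)/(A4) at a single point, while yours gives a pleasant structural picture of how a fourth \cln\ would overpopulate a $3$-space.

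Two minor remarks. First, your observation that $W$ is an \ppt\ of $\eta$ is true but never used in the counting argument that follows, so it can be dropped. Second, invoking ``$q$ even'' for the collinearity of the third diagonal point $Z$ is fine, but the same (A3)-argument you gave for $S_1$ applies verbatim to $AD,BC$ and keeps the proof self-contained.
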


\begin{proof}
Let $\ell$, $m$ be $\C$-lines (so by Corollary~\ref{inf-pts-in-si}, they are $\infty$-lines).
Let $X\in\ell$, $Y\in m$ be \spt s.
By Lemma~\ref{arc-complete}, there is a line through $X$ containing two \cpt s $A,B$.
By Corollary~\ref{def-cline}, $m$ defines a parallel class, and so $\langle m,A\rangle$ is a $\C$-plane in this parallel class.
So
by Lemma~\ref{arc-complete}, the line $AY$ in the $\C$-plane $\langle m,A\rangle$    contains another \cpt, $C$ say.
By (A2), $B,C$ lie in a $\C$-plane\ $\gamma$, further $\gamma$ does not belong to the parallel class defined by either $\ell$ or $m$ as $Z=BC\cap\si$ is not in $\ell$ or $m$.
So $\gamma$ meets $\si$ in a $\C$-line $n$ which by Corollary~\ref{def-cline} is disjoint from $\ell$ and $m$.
Consider the plane $\pi=\langle A,B,C\rangle$.
We have $X,Y\in\pi$, so the point $Z$ lies on the line $XY$, that is $Z=XY\cap BC\in n$.
By Corollary~\ref{inf-pts-in-si}, $n$ is the $\infty$-line of $\gamma$, so as $Z$ lies on a 2-secant $BC$ of $\C$, by Lemma~\ref{arc-complete}, $Z$ is a \spt.
By Lemma~\ref{arc-complete}, the line $AZ$ contains a further \cpt, $D$ say, so the plane $\pi$ contains four \cpt s, $A,B,C,D$.
Note that $\pi\cap\si=XY$ is not a $\C$-line, as $\C$-lines are disjoint, so $\pi$ is not a \cpl.

So we have shown that $XY=\pi\cap\si$ meets another $\C$-line $n$ in a \spt\ $Z$.
Suppose that $XY$ meets a further $\C$-line $t$ in a point $W$. Note that $X,Y\in\pi$, so $W\in\pi$. As $B,C,W,A\in\pi$, let $E=BC\cap WA$ and note that $E$ is distinct from $A,B,C,D$. If $E$ is a \cpt, then $\pi $ contains five \cpt s, $A,B,C,D,E$, contradicting (A4) as $\pi$ is not a \cpl. 
If $E$ is not a \cpt, then $E$ is on two \cpl s, namely  $\langle A,t\rangle$ and $\langle B,n\rangle$, this contradicts (A3).
Hence $XY$ cannot meet four $\C$-lines. That is, $XY$ meets exactly one other $\C$-line, and it meets it in a \spt. 
\end{proof}

By Corollary~\ref{inf-pts-in-si}, the \pln s are $\C$-lines, and the \ppt s lie on the $\C$-lines. By Lemma~\ref{plus-pts-coincide}, \cpl s in the same parallel class have the same \ppt s. By Corollary~\ref{def-cline}, there are $q+1$ distinct, pairwise disjoint $\C$-lines. Hence there are $2(q+1)$ distinct \ppt s. We now show that the $2(q+1)$ \ppt s form two disjoint lines in $\si$.

\begin{lemma}\Label{line-ppts}
The $2(q+1)$ \ppt s form two disjoint lines of $\si$. We label these lines $t_N$, $t_\infty$.
\end{lemma}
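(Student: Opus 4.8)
The plan is to show that the $2(q+1)$ \ppt s, which by Corollary~\ref{inf-pts-in-si} all lie in $\si$, split into two sets of $q+1$ points each forming a line of $\si$, and that these two lines are disjoint. The key tool will be Lemma~\ref{three-spts}, which constrains how a line through two \spt s meets the $\C$-lines; I expect to use it contrapositively to force collinearity of \ppt s. First I would fix one $\C$-line $\ell$ and its two \ppt s, call them $P$ and $N$. For any other $\C$-line $m$ with \ppt s $P'$, $N'$, I want to argue that exactly one of the two lines $PP'$, $PN'$ meets $m$ in an \ppt\ and the other in an \spt\ — and similarly for $N$ — and then bootstrap this pairing across all $q+1$ $\C$-lines to get a consistent two-colouring of the $2(q+1)$ \ppt s, with each colour class collinear.

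The main steps, in order. \textbf{(1)} Consider the line $PP'$ joining an \ppt\ of $\ell$ to an \ppt\ of $m$. Since $\C$-lines are pairwise skew (Corollary~\ref{def-cline}), this line is not a $\C$-line and meets $\si$ in a line that is secant to both $\ell$ and $m$. I would analyse, using Lemma~\ref{arc-complete} applied in a suitable \cpl\ of the parallel class of $m$, whether the second intersection point on $m$ can be a \spt: if a line meets $\ell$ in an \ppt\ and $m$ in a \spt, build a plane $\pi=\langle A,B,C\rangle$ on one \cpt\ from a $1$-secant through the \ppt\ of $\ell$ and two \cpt s from a $2$-secant through the \spt\ of $m$ (exactly as in the proof of Lemma~\ref{plus-pts-coincide}), derive a fourth \cpt\ $D$ via (A4), and reach a contradiction with (A3) or with two \cpl s of different parallel classes lying in a common $3$-space. \textbf{(2)} This rules out the "\ppt–\spt" configuration, so any line through an \ppt\ of $\ell$ and a point of $m$ meets $m$ either in an \ppt\ or in a \cpt; a line through two \ppt s (one on each of $\ell,m$) therefore meets every $\C$-line it meets in an \ppt. \textbf{(3)} Now fix $P\in\ell$ and look at the $q$ lines from $P$ to the two \ppt s of each of the other $q$ $\C$-lines. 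Using Step (2) and Lemma~\ref{three-spts} (a line meeting two \cln s in \spt s meets exactly one more, in a \spt, hence cannot meet many), I would count: through $P$ there are exactly $q+1$ lines of $\si$ meeting $\ell$ only at $P$ that each meet some other $\C$-line, and a pigeonhole/counting argument forces all the "$P$-side" \ppt s of the other $C$-lines to lie on a single line through $P$. \textbf{(4)} Repeat for $N\in\ell$ to get the second line $t_\infty$; since $\ell=t_N\cap$ (the other line)… rather, $P\in t_N$, $N\in t_\infty$, and $t_N,t_\infty$ each contain one \ppt\ from every $\C$-line. \textbf{(5)} Finally, $t_N\cap t_\infty=\emptyset$: a common point would lie on two $\C$-lines (impossible, they're skew) or would be an \ppt\ counted twice; alternatively a point on both would make some $\C$-line meet $t_N$ and $t_\infty$ in the same point, contradicting that each $\C$-line has two distinct \ppt s, one in each.

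The hard part will be Step (1) together with Step (3): pinning down the precise combinatorial configuration of a line through two \ppt s — i.e., proving such a line meets a third $\C$-line necessarily in an \ppt\ (not an \spt), and that the resulting incidences are rigid enough that all same-side \ppt s become collinear. The subtlety is that Lemma~\ref{three-spts} is phrased for \spt s, so I must first establish the analogous (but cleaner) statement for \ppt s, likely by the same plane-and-(A4) technique, being careful that the auxiliary \cpt s produced never accidentally land on $\ell$ or $m$ (which is where the hypothesis $q>2$ is used to choose secants avoiding bad points). Once the "\ppt\ lines behave like the nucleus/point-at-infinity lines of a hyperoval" principle is nailed down, the collinearity and disjointness are a short counting argument over the $q+1$ pairwise-skew $\C$-lines.
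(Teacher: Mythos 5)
There is a genuine gap, and it sits at the core of your plan: the claim in Steps (1)--(2) that the ``\ppt--\spt'' configuration can be ruled out is false. Lines of $\si$ joining an \ppt\ of one \cln\ to a \spt\ of another \cln\ exist in every model of (A1--4): in the configuration that the theorem ultimately produces (Theorem~\ref{cline-coord}, arising from the translation ovals of Theorem~\ref{oval-cplanes-props}), the line of $\si$ joining the \ppt\ $(0,1,0,0,0)$ of $m_0$ to the \spt\ $(1,1,1,1,0)$ of $m_1$ meets $m_0$ only in that \ppt\ and $m_1$ only in that \spt. So no contradiction can follow from such a line, and indeed your attempted mimicry of Lemma~\ref{plus-pts-coincide} does not get started: through an \ppt\ $X$ of $\ell$ the lines of a \cpl\ containing $\ell$ are all $1$-secants or $0$-secants of $\C$ (Lemma~\ref{arc-complete}, part 1), and the $1$-secant through $X$ and the $2$-secant through the \spt\ $Y$ of $m$ now live in \cpl s of \emph{different} parallel classes, so they need not meet and there is no guarantee of a plane $\langle A,B,C\rangle$ containing the line $XY$ on which to invoke (A4). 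With Step (2) gone, the two-colouring and the pigeonhole count of Step (3) --- which is in any case not carried out (through a fixed \ppt\ of $\ell$ there are $q+1$ lines of $\si$ meeting any single other \cln, so the count as stated does not parse) --- have nothing to rest on.

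For contrast, the paper's proof avoids \ppt s entirely until the last line: it works with the \sln s of Lemma~\ref{three-spts} and double-counts ordered triples $(A,B,C)$ of \spt s on a common \sln. The lower bound $(q+1)(q-1)\cdot q(q-1)$ for the number of such triples comes straight from Lemma~\ref{three-spts}; the upper bound comes from the observation that the transversals of three pairwise skew \cln s form the opposite regulus of size $q+1$, of which at most $q-1$ can be \sln s, with equality precisely when the six \ppt s of the three \cln s lie on the remaining two transversals. This gives at most $\binom{q+1}{3}(q-1)\cdot 3!=(q+1)q(q-1)^2$ triples, which equals the lower bound, so the extremal situation holds for every triple of \cln s, and that is what forces all the \ppt s onto two lines. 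If you want to rescue your approach, the regulus of transversals to three skew \cln s is the structural ingredient you are missing; without it, local statements about pairs of \cln s cannot be propagated to global collinearity.
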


\begin{proof} In this proof we are interested in the lines of $\si$ that meet three $\C$-lines in \spt s. These lines exist by Lemma~\ref{three-spts}, and in this proof, we call these lines \sln s.
Let $A,B$ be \spt s on distinct $\C$-lines. By Lemma~\ref{three-spts}, $AB$ is a \sln, and contains exactly one further \spt, $C$ say. 
We will count in two ways the number $x$ of ordered triples $(A,B,C)$ where $A,B,C$ are \spt s on a common \sln. Firstly, there are $(q+1)(q-1)$ choices for $A$; then $q(q-1)$ choices for $B$, yielding a unique $C$. So $x=(q+1)(q-1)q(q-1)$.
Secondly, we count the number of \sln s first. 
Let $r_0$ be a \sln, so $r_0$ meets three $\C$-lines $\ell,m,n$. These three $\C$-lines determine a unique regulus $\R$, with opposite regulus $\R'=\{r_0,\ldots,r_q\}$. Each line of $\R'$ meets the three $\C$-lines $\ell,m,n$, so by Lemma~\ref{three-spts}, they meet no further $\C$-line. Further, 
if any of the lines $r_i$ contains two \spt s, then it contains three \spt s, and is a \sln.
The maximum number of \sln s in $\R'$ is $q-1$, which occurs when the \ppt s of $\ell,m,n$ all lie on two lines of $\R'$.
So the total number of \sln s is at most $\binom{q+1}3\times (q-1)$.
The number of ordered triples of \spt s on a \sln\ is $3!$. Hence this count gives $x\leq\binom{q+1}3\times (q-1)3!=(q+1)q(q-1)^2$. 
Thus $x= (q+1)q(q-1)^2$, and for each set of three $\C$-lines, the \ppt s lie on two common lines. Hence all the \ppt s lie on two lines of $\si$. 
\end{proof}

\subsection{Coordinatising the $\C$-lines}\Label{section-coord}

In Sections~\ref{section-cplanes} and~\ref{section-cline}, we showed that given a set of \cpt s and \cpl s in $\PG(4,q)$ satisfying (A1-4), we can construct from them $q+1$ $\C$-lines, and two special lines $t_N$ and $t_\infty$  in $\si$. We now show that without loss of generality, we can coordinatise these special lines of $\si$ as outlined in Theorem~\ref{cline-coord}.

To determine this coordinatisation, we use the Klein correspondence between lines of $\si\cong\PG(3,q)$ and points of the Klein quadric $\H_5$ in $\PG(5,q)$.
The line $PQ$ of $\PG(3,q)$ with $P=(p_0,p_1,p_2,p_3)$, $Q=(q_0,q_1,q_2,q_3)$ maps to the point $(\ell_{01},\ell_{02},\ell_{03},\ell_{12},\ell_{31},\ell_{23})$ of $\H_5$ in $\PG(5,q)$ where $\ell_{ij}=p_iq_j-p_jq_i$. The Klein quadric $\H_5$ has equation $x_0x_5+x_1x_4+x_2x_3=0$. 
For background on the Klein correspondence, see \cite{hirs85}.

\begin{theorem}\Label{cline-coord}
In $\PG(4,q)$ we can without loss of generality coordinatise the lines $t_N$, $t_\infty$ and the $\C$-lines $m_t$, $t\in\GF(q)\cup\{\infty\}$ as follows:
\begin{eqnarray*}
t_N&=&\langle (0,0,1,0,0),\ (0,0,0,1,0)\rangle,\quad\\
t_\infty&=&\langle (1,0,0,0,0),\ (0,1,0,0,0)\rangle,\\
m_t&=&\langle(t^{2^n},1,0,0,0),\ (0,0,t,1,0)\rangle,\quad t\in\GF(q)\cup\{\infty\}.
\end{eqnarray*}
\end{theorem}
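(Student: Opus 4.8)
The plan is to use the Klein correspondence to translate the combinatorial configuration in $\si\cong\PG(3,q)$ — namely the two skew lines $t_N, t_\infty$ together with the $q+1$ pairwise-skew $\C$-lines $m_t$, each of which meets both $t_N$ and $t_\infty$ (an \ppt\ on each, by Lemma~\ref{line-ppts} and Corollary~\ref{inf-pts-in-si}) — into a configuration of points and conics on the Klein quadric $\H_5$, and then normalise coordinates there. First I would record what we know: $t_N$ and $t_\infty$ are skew, so under Klein they map to two points $p_N, p_\infty$ of $\H_5$ that are \emph{non-conjugate} with respect to the associated polarity (equivalently, $p_Np_\infty$ is a secant line of $\H_5$ meeting it in exactly those two points). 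Each $\C$-line $m_t$ is skew to every other $\C$-line and meets both $t_N$ and $t_\infty$; lines meeting a fixed line $t_N$ form (under Klein) a plane section $\pi_N$ of $\H_5$ (the tangent hyperplane at $p_N$ meets $\H_5$ in a cone, and the lines through a fixed point of $t_N$ lie in... ) — more precisely, the set of lines of $\PG(3,q)$ meeting $t_N$ maps to $\H_5 \cap p_N^\perp$, a parabolic/cone section. So the $\C$-line images all lie in $p_N^\perp \cap p_\infty^\perp$, a $3$-space section of $\H_5$, which (since $p_N, p_\infty$ are non-conjugate) is a hyperbolic quadric $\H_3$ or, after removing degeneracies, the Klein image of the $\C$-lines is a conic.

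The key steps, in order, are: (1) choose homogeneous coordinates on $\si$ so that $t_N = \langle e_2, e_3\rangle$ and $t_\infty = \langle e_0, e_1\rangle$; the group $\PGL(4,q)$ stabilising this ordered pair of skew lines is large (it contains $\GL(2,q)\times\GL(2,q)$ acting on the two lines plus scalars), and I will use this freedom later. (2) Write a general line meeting both $t_N$ and $t_\infty$: it is $\langle (a,b,0,0),\,(0,0,c,d)\rangle$ for $(a,b)\neq(0,0)$, $(c,d)\neq(0,0)$, and its Klein coordinates are easily computed — the line is determined by the point $(a{:}b)$ on $t_\infty$ and $(c{:}d)$ on $t_N$, giving a Segre-type parametrisation, so the $\C$-lines correspond to $q+1$ points of this $\PG(1,q)\times\PG(1,q)$ "grid". (3) Show the $q+1$ $\C$-lines being \emph{pairwise skew} forces the map $(a{:}b)\mapsto(c{:}d)$ between the two pencils to be a bijection: two such lines $\langle(a,b,0,0),(0,0,c,d)\rangle$ and $\langle(a',b',0,0),(0,0,c',d')\rangle$ meet iff the $4\times 4$ determinant vanishes iff $(ab'-a'b)(cd'-c'd)=0$, so skewness $\iff$ both factors nonzero $\iff$ the correspondence $(a{:}b)\leftrightarrow(c{:}d)$ is a bijection of $\PG(1,q)$'s. (4) Invoke the combinatorial content proved in Section~\ref{section-cline} — in particular Lemma~\ref{three-spts} and the regulus argument in Lemma~\ref{line-ppts} — to show this bijection is actually a \emph{projectivity}, i.e. induced by an element of $\PGL(2,q)$, NOT merely a bijection: the statement that for every three $\C$-lines the \ppt s lie on two common lines of a regulus is exactly the classical condition that three skew lines meeting $t_N$ and $t_\infty$ lie on a common hyperbolic quadric with $t_N, t_\infty$ as generators, which pins the correspondence down to a projectivity. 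Actually I'd argue this more cleanly: the $q+1$ $\C$-lines together with $t_N, t_\infty$ behave like $q+3$ generators... no — rather, the $\C$-lines are $q+1$ lines of one regulus whose opposite regulus contains $t_N, t_\infty$; so the $\C$-lines all lie on one hyperbolic quadric $\Q^+$, and on such a quadric the generators of one system are parametrised projectively. (5) Finally, use the stabiliser freedom from step (1): having a hyperbolic quadric $\Q^+$ in $\PG(3,q)$ with $t_N, t_\infty$ two lines of the opposite regulus, I can choose coordinates so $\Q^+$ has equation $x_0x_3 = x_1x_2$ (or the Frobenius-twisted form $x_0x_3^{2^n} = x_1x_2^{2^n}$ appropriate to the translation-oval setting), forcing $m_t = \langle(t^{2^n},1,0,0,0),(0,0,t,1,0)\rangle$ for $t\in\GF(q)\cup\{\infty\}$ — and here the exponent $2^n$ must be extracted, which is the subtle point.

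**The main obstacle** will be step (4)–(5): showing that the correspondence between the pencils of points on $t_N$ and $t_\infty$ cut out by the $\C$-lines is not just \emph{any} bijection but precisely $t\mapsto t^{2^n}$ for a suitable $n$ with $(n,h)=1$ — i.e. extracting the Frobenius exponent. A priori the combinatorial axioms only force the $\C$-lines to lie on a common hyperbolic quadric (from the regulus/\sln\ count in Lemma~\ref{line-ppts}), which after coordinatisation says $m_t = \langle(\sigma(t),1,0,0,0),(0,0,t,1,0)\rangle$ for \emph{some} permutation $\sigma$ of $\gfqp$ fixing $0$ and $\infty$; the hard work is to show $\sigma$ can be normalised to an additive map, hence a power map $x\mapsto x^{2^n}$, hence (for the later parts of the proof to yield an oval rather than a degenerate set) with $(n,h)=1$. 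I would try to get additivity of $\sigma$ by feeding back the \emph{affine} information — that the \cpt s form a $q^2$-arc structure over these $\C$-lines — but since Theorem~\ref{cline-coord} as stated only concerns the $\C$-lines in $\si$, I suspect the honest answer is that at this stage one only proves $m_t$ has the stated form after absorbing an \emph{arbitrary} field automorphism into "without loss of generality", and the power-map normalisation with $(n,h)=1$ is deferred; so the real content here is the quadric identification plus a careful bookkeeping of the residual coordinate freedom, and I would flag that the exponent written as $2^n$ is, at this point, a placeholder to be justified in Section~\ref{main-proof}. If instead the intended proof does pin down $2^n$ now, it would have to come from combining the regulus condition with Lemma~\ref{three-spts} applied to \emph{all} triples simultaneously (not just three lines), which is a strong enough constraint to force $\sigma$ to be a power of Frobenius — that is the step I would expect to require the most care.
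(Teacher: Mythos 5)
Your setup --- coordinatising $t_N,t_\infty$ as two skew coordinate lines, observing that the $\C$-lines lie in the hyperbolic congruence of their transversals, parametrising each transversal by a pair $\bigl((a{:}b),(c{:}d)\bigr)$, and noting that pairwise skewness forces the $\C$-lines to be the graph of a bijection $\sigma$ between the two pencils --- agrees with the paper up to the point where the form of $\sigma$ must be determined, and that is exactly where your argument breaks. You assert that the $\C$-lines form a regulus whose opposite regulus contains $t_N$ and $t_\infty$, so that $\sigma$ is a projectivity. This is false: it contradicts the statement being proved (for $(n,h)=1$ and $q>2$ the map $t\mapsto t^{2^n}$ is a nontrivial field automorphism, so $\sigma\notin\PGL(2,q)$ and the Klein image $\K$ is not a conic), and it contradicts Lemma~\ref{three-spts}, since a line of the putative opposite regulus other than $t_N,t_\infty$ would meet all $q+1>3$ of the $\C$-lines, at least two of them in \spt s (their \ppt s all lie on $t_N\cup t_\infty$ by Lemma~\ref{line-ppts}), whereas Lemma~\ref{three-spts} says such a line meets exactly three $\C$-lines. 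Your fallback --- that the exponent $2^n$ is a placeholder to be absorbed into ``without loss of generality'' and justified in Section~\ref{main-proof} --- is also not viable: the later argument takes $n$ with $(n,h)=1$ as already established and never re-derives it, so it must be, and is, pinned down here.

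The missing idea is to push Lemma~\ref{three-spts} through the Klein correspondence to get an \emph{arc} condition, not a quadric condition. A plane of the $3$-space $\Sigma\supset\H_3$ containing three or more points of $\K$ sits under a conic cone of $\H_5$ whose vertex $L$ is the image of a line of $\si$ meeting three or more $\C$-lines, and Lemma~\ref{three-spts} caps this at exactly three; hence no four points of $\K$ are coplanar and $\K$ is a $(q+1)$-arc of $\Sigma\cong\PG(3,q)$. The exponent then comes from the classification of $(q+1)$-arcs of $\PG(3,q)$, $q$ even (\cite[Theorem 21.3.15]{hirs85}): $\K$ is projectively equivalent to $\{(t^{2^n+1},t^{2^n},t,1)\st t\in\GF(q)\cup\{\infty\}\}$ with $(n,h)=1$. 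The one remaining point of care, which you would also need, is that the normalising homography is compatible with the coordinates already fixed for $t_N,t_\infty$; the paper secures this by identifying the special-unisecant quadric $\H(\K)$ with $\H_3$, so that the homography fixes $\H_3$ and therefore induces a collineation of $\si$ preserving the congruence of transversals of $t_N,t_\infty$.
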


\begin{proof} In this proof, we use the Klein correspondence between the lines of $\si\cong\PG(3,q)$ and the points of the Klein quadric $\H_5$ in $\PG(5,q)$. In $\PG(4,q)$, a point $(x_0,x_1,x_2,x_3,x_4)$ in $\si$ has last coordinate $x_4=0$, so we write the coordinates of points in $\si$ as $(x_0,x_1,x_2,x_3)$.
Let $A=(1,0,0,0)$, $B=(0,1,0,0)$, $C=(0,0,1,0)$, $D=(0,0,0,1)$ be points of $\si$. Without loss of generality we can let $t_\infty=AB$ and $t_N=CD$. Consider the lines $\ell_1=AC,\ \ell_2=AD,\ \ell_3=BC,\ \ell_4=BD$. In $\si$, the $\C$-lines and $\ell_1,\ldots,\ell_4$ all meet $t_N$ and $t_\infty$, so they lie in the hyperbolic congruence consisting of the $(q+1)^2$ transversals of $t_N,t_\infty$.
This hyperbolic congruence maps under the Klein correspondence to a hyperbolic quadric $\H_3$ contained in a 3-space $\Sigma$ in $\PG(5,q)$ (see \cite{hirs85}).  Hence in $\PG(5,q)$, the $\C$-lines correspond to a set $\K$ of $q+1$ points that lie on $\H_3$. By Corollary~\ref{def-cline}, the $\C$-lines are pairwise skew, hence in $\PG(5,q)$ the points of $\K$ are such that no two lie on a generator line of $\H_3$. Hence there is exactly one point of $\K$ on each generator of $\H_3$.

Now consider the result of Lemma~\ref{three-spts}. Let $m_1,m_2$ be two $\C$-lines of $\si$, and let $\ell$ be a line that meets $m_1,m_2$ in \spt s. Then $\ell$ meets exactly one more $\C$-line, $m_3$ say. In $\PG(5,q)$, $\ell$ corresponds to a point $L$ not in $\Sigma$ (the 3-space containing $\H_3$), and the line $m_i$ corresponds to a point $M_i$ in $\K$. Now Lemma~\ref{three-spts} says that if $LM_1$, $LM_2$ are lines of the Klein quadric $\H_5$, then there is exactly one more line $LM_3$ of $\H_5$ through $L$ that contains a point $M_3$ of $\K$. Now  $\langle L,M_1,M_2,M_3\rangle$ is a 3-space as $M_1,M_2,M_3$ are not on a generator of $\H_3$. So $\langle L,M_1,M_2,M_3\rangle$ meets $\H_5$ in a conic cone, and so meets $\Sigma$ in a conic. Conversely, a conic of $\H_3$ will lie on such a conic cone of $\H_5$. Hence any conic of $\H_3$ contains at most three points of $\K$. Hence each plane of $\Sigma$ meets $\K$ in at most three points. Thus $\K$ is a $(q+1)$-arc of $\Sigma$.

The images of the lines $\ell_1,\ell_2,\ell_3,\ell_4$ of $\si$ under the Klein correspondence are $L_1=(0,1,0,0,0,0)$, 
$L_2=(0,0,1,0,0,0)$, 
$L_3=(0,0,0,1,0,0)$, 
$L_4=(0,0,0,0,1,0)$. 
These four points lie in the 3-space $\Sigma$ determined by the equations $x_0=0,\ x_5=0$. If we use $(x_1,x_2,x_3,x_4)$ for the coordinates of $\Sigma$, then $\Sigma$ meets $\H_5$ in the hyperbolic quadric of equation $\H_3: x_1x_4+x_2x_3=0$. 

In \cite[Section 21.3]{hirs85}, a line $\ell$ through a point $P$ of $\K$ is called a \textit{special unisecant} of $\K$ if every plane through $\ell$  contains at most one other point of $\K$.  Further, it is shown that there are exactly two special unisecants through each point $P$ of $\K$, and the special unisecants are the generators of a quadric $\H(\K)$.  We note that since there is exactly one point of $\K$ on each generator $g$ of $\H_3$, then the $q+1$ planes through $g$ defined by $g$ and one of $q+1$ lines in the opposite regulus of $\H_3$ contains at most one  point of $\K$.  This argument shows that $\H(\K)=\H_3$.

By \cite[Theorem 21.3.15]{hirs85}, the $(q+1)$-arc $\K$ is projectively equivalent to $\C(2^n)=\{(t^{2^n+1},t^{2^n},t,1)\st t\in\GF(q)\cup\{\infty\}\}$ for some $n$ with $(n,h)=1$. Further, $\C(2^n)$ determines through it special unisecants the hyperbolic quadric $\H(\C(2^n))$ which can be shown to equal $\H_3$ (with equation $x_1x_4+x_2x_3=0$).  Thus the homography $\sigma$ mapping $\K$ to $\C(2^n)$ fixes $\H_3$. 
The homographies of $\PGO_+(6,q)$ that fix the Klein quadric are in 1-1 correspondence with the homographies of $\PGL(4,q)$ that map lines of $\si$ to lines of $\si$. Hence $\sigma$  
corresponds to a homography of $\si$ that fixes the hyperbolic congruence consisting of all transversals of $t_N,t_\infty$. So without loss of generality, we can coordinatise the $\C$-lines to be the lines that correspond to points of $\C(2^n)$. Straightforward calculations show that the line $m_t=\langle(t^{2^n},1,0,0),(0,0,t,1)\rangle$ for $t\in\GF(q)\cup\{\infty\}$ corresponds under the Klein map to the point $M_t=(0,t^{2^n+1},t^{2^n},t,1,0)$ which lies in $\C(2^n)$. Hence we can without loss of generality coordinatise the $\C$-lines and $t_N,t_\infty$ as stated in the theorem.
\end{proof}

\subsection{Proof of the Main Theorem}\Label{main-proof}

In this section we complete the proof of Theorem~\ref{main-theorem}.

\textbf{Proof of Theorem~\ref{main-theorem}\ } Suppose we have a set of \cpt s and \cpl s in $\PG(4,q)$ that satisfy (A1-4). Then we have shown that in the hyperplane at infinity $\si$, there are two special lines $t_N$, $t_\infty$ and  $q+1$ $\C$-lines $m_t$, $t\in\GF(q)\cup\{\infty\}$. In Theorem~\ref{cline-coord} we coordinatised these lines. We now coordinatise the \cpt s. 

Without loss of generality we can choose $A=(0,0,0,0,1)$ to be a $\C$ point. 
The plane spanned by $A$ and the $\C$-line $m_\infty$ is a \cpl, that is
$$\alpha_{(A,\infty)}=\langle A,m_\infty\rangle=\langle (0,0,0,0,1), \eo,\eii\rangle$$
is a \cpl. Every line through $A$ in this plane either meets $t_N$ or $t_\infty$, or contains exactly one further \cpt.  
Points in  $\PG(4,q)$ have coordinates $(x_0,x_1,x_2,x_3,x_4)$. Points in this plane  $\alpha=\alpha_{(A,\infty)}$  have $x_1=x_3=0$, so we use the coordinates 
$(x_0,x_2,x_4)$ to represent points of $\alpha$. With this notation, $A=(0,0,1)$, and consider the points $R_N=\alpha\cap t_N=(0,1,0)$, $R_\infty=\alpha\cap t_\infty=(1,0,0)$. Consider the point $Q=(1,1,0)$ on the line $R_NR_\infty$. The line $AQ$ consists of points $A,Q$ and $(s,s,1)$, $s\in\GF(q)\setminus\{0\}$. As $Q\neq R_N,R_\infty$,  there exists some $s\in\GF(q)\setminus\{0\}$ such that $B=(s,s,1)$ is a \cpt. Hence in $\PG(4,q)$, we can take $B=(s,0,s,0,1)$ to be a \cpt, where $s$ is some nonzero element of $\GF(q)$.

We can use $A$ and $B$ to calculate the coordinates of the remaining \cpt s and \cpl s. Firstly, the plane spanned by $A$ and any $\C$-line $m_t$ is a \cpl, similarly for $B$. This give us the \cpl s
\[
\begin{array}{ccccccc}
\alpha_{(A,t)}&\!\!=\!\!&\langle A,m_t\rangle&\!\!=\!\!&\langle (0,0,0,0,1),\eti,\etii\rangle,&\!\!t\in\GF(q)\cup\{\infty\},\\
\alpha_{(B,u)}&\!\!=\!\!&\langle B,m_u\rangle&\!\!=\!\!&\langle (s,0,s,0,1),(u^{2^n},1,0,0,0),(0,0,u,1,0)\rangle&\!\!u\in\GF(q)\cup\{\infty\}.
\end{array}
\]
Note that
\begin{eqnarray*}
\alpha_{(A,\infty)}&=&\langle (0,0,0,0,1),(1,0,0,0,0),(0,0,1,0,0)\rangle,\\
\alpha_{(B,\infty)}&=&\langle (s,0,s,0,1),(1,0,0,0,0),(0,0,1,0,0)\rangle.
\end{eqnarray*}
By (A2), $A,B$ lie on a unique \cpl, which is  $\alpha_{(A,\infty)}=\alpha_{(B,\infty)}$. So this gives
coordinates of $(2q+1)$ \cpl s.
By Lemma~\ref{affine-plane}, two distinct \cpl s meet in a unique \cpt. 
The coordinates of the $q(q-1)$ \cpt s $P_{(A,t),(B,u)}=\alpha_{(A,t)}\cap\alpha_{(B,u)}$
for $t,u\in\GF(q),\ t\neq u$ are
\begin{eqnarray*}
P_{(A,t),(B,u)}&=&\Ptu.
\end{eqnarray*}
The remaining $q-1$ \cpt s are in $\alpha_{(A,\infty)}$, we calculate their coordinates next. As $B\in\alpha_{(A,\infty)}$, we first need to calculate some \cpl s that meet $\alpha_{(A,\infty)}$.
Let $u\in\GF(q)\setminus\{0\}$ and consider the \cpt\ 
$$
P_{(A,0),(B,u)}=
{\left(0,\ \frac{\x}{u^\nn},\ 0,\ \frac{\x}{u},\ 1\right)}
$$ which lies in the $\C$-plane\ $\alpha_{(A,0)}$. 
Consider the $q$ \cpl s $\alpha_{(P,t)}=\langle P_{(A,0),(B,u)},m_t\rangle$ for $t\in\GF(q)\setminus\{0\}$. 
These $q$ \cpl s each meet $\alpha_{(A,\infty)}$ in a \cpt. Hence the $q-1$ \cpt s in $\alpha_{(A,\infty)}$ distinct from $A$ are
 $$\alpha_{(A,\infty)}\cap\alpha_{(P,t)}={\left(\frac{\x t^\nn}{u^\nn},\ 0,\ \frac{\x t}{u},\ 0,\ 1\right)},$$ $t\in\GF(q)\setminus\{0\}$.
Note that we  only need one parameter here, let $\psi=t/u$, then the $q-1$ \cpt s are $$Q_\psi=(s\psi^{2^n},0,s\psi,0,1),$$
$\psi\in\GF(q)\setminus\{0\}$. Hence we have calculated the coordinates of the $q^2$ \cpt s. 

We show that we can choose a regular spread $\S$ such that  in the corresponding plane $\P(\S)\cong\PG(2,q^2)$ (under the Bruck-Bose map) 
the \cpt s form a translation oval.

Let $\tau$ be a primitive element of $\GF(q^2)$, and let $n<h$ be a positive integer such that $(n,h)=1$. Then we can uniquely write $\tau^{2^n}$ as $\tau^{2^n}=a_0+a_1\tau$ for some $a_0,a_1\in\GF(q)$. Note that as $(n,h)=1$, $\GF(2^n)\cap\GF(2^h)=\GF(2)$, hence $a_1\neq0$.
Now consider the homography $\sigma$ of $\PG(4,q)$ with matrix
$$M=\begin{pmatrix}
1&a_0&0&0&0\\
0&a_1&0&0&0\\
0&0&1&0&0\\
0&0&0&1&0\\
0&0&0&0&1
\end{pmatrix}.$$
The \cpt s 
$P_{(A,t),(B,u)},Q_{\psi}$ are mapped under $\sigma$ to 
\begin{eqnarray*}
P_{(t,u)}&=&\sigma(P_{(A,t),(B,u)})={\left(\frac{\x (t^\nn+a_0)}{(t+u)^\nn},\ a_1\frac{\x}{(t+u)^\nn},\ \frac{\x t}{t+u},\ \frac{\x}{t+u},\ 1\right)}\\
Q_{\psi}&=&\sigma(Q_\psi)=
{\left(\x \psi^\nn,\ 0,\ \x \psi,\ 0,\ 1\right)}.
\end{eqnarray*}
The \cln s $m_\infty$, $m_t$, $t\in\GF(q)$ are mapped under $\sigma$ to
\begin{eqnarray*}
m'_\infty&=&\langle\eo,\eii\rangle=m_\infty,\\
m'_t&=&\langle (t^\nn+a_0,a_1,0,0,0),\etii\rangle.
\end{eqnarray*}
Note that the special lines $t_N,t_\infty$ are fixed by the homography $\sigma$. 

So without loss of generality, we can map the \cpt s, $\C$-lines and \cpl s so that the special lines are 
$t_N=\langle (0,0,1,0,0),\ (0,0,0,1,0)\rangle$, $t_\infty=\langle (1,0,0,0,0),\ (0,1,0,0,0)\rangle$; 
the $\C$-lines are $m'_t=\langle (t^\nn+a_0,a_1,0,0,0),\etii\rangle$, $t\in\GF(q)\cup\{\infty\}$; and the \cpt s are $P_{(t,u)}$, $Q_\psi$, $t,u,\psi\in\GF(q)$, $t\neq u$, $\psi\neq0$. 
We now look at this in the Bruck-Bose plane $\PG(2,q^2)$ using the regular spread $\S$ coordinatised in Section~\ref{sect:BB}. Note that the primitive element $\tau$ used to extend $\GF(q)$ to $\GF(q^2)$ in this coordinatisation is the same as the $\tau$ used to determine $a_0,a_1$ in the matrix $M$ (so $\tau^{2^n}=a_0+a_1\tau$). 
Also note that $t_N$, $t_\infty$ are two lines of this regular spread, since using the notation of Section~\ref{sect:BB} we have $t_N=p_\infty$ and $t_\infty=p_0$.
Using this regular spread $\S$, we look in the related Bruck-Bose plane $\P(\S)\cong\PG(2,q^2)$ and show that the points of $\C$ correspond to a translation oval in $\PG(2,q^2)$.

Under the Bruck-Bose map, in $\PG(2,q^2)$, the 
point $P_{t,u}$ corresponds to the point
\begin{eqnarray*}
P_{t,u}&=&{\left(\frac{\x (t^\nn+a_0)}{(t+u)^\nn}+\frac{\x a_1}{(t+u)^\nn}\tau,\ \frac{\x t}{t+u}+ \frac{\x}{t+u}\tau,\ 1\right)}.
\end{eqnarray*}
Let $$\theta=\theta(t,u)=\frac{t}{t+u}+ \frac{1}{t+u}\tau,$$ then $$\{P_{(t,u)}\st  t,u\in\GF(q), t\neq u\}=\{\left(\x\theta^\nn,\ \x\theta,\ 1\right)\st \theta\in\GF(q^2)\backslash\GF(q)\}.$$
The point $Q_\psi$, $\psi\in\GF(q)\setminus\{0\}$ of $\PG(4,q)$ corresponds in $\PG(2,q^2)$ to the point $Q_\psi=(\x\psi^\nn,\x\psi,1)$.  
The spread lines $t_\infty$ and $t_N$ correspond in $\PG(2,q^2)$ to the points $P_\infty=(1,0,0)$ and $N=(0,1,0)$ respectively on the line at infinity $\li$.
Consider the set of points $\K$ in $\PG(2,q^2)$ consisting of the images of $P_{t,u},Q_{t,u},P_\infty,N$, so $$\K=\{ {\left(\x\theta^\nn,\ \x\theta,\ 1\right)}  \st \theta\in\GF(q^2)\}\cup\{(1,0,0)\}\cup\{(0,1,0)\}.$$
Then $\K$ is the image of the translation oval $\Ooo(2^n)$ with nucleus $N(0,1,0)$ under the 
projectivity with matrix
$$\begin{pmatrix}
\x&0&0\\0&\x&0\\0&0&1
\end{pmatrix}.$$
Hence if we have a set of \cpt s and \cpl s satisfying (A1-4), then they arise from a translation oval in $\PG(2,q^2)$.  This completes the proof of Theorem~\ref{main-theorem}.
\hfill$\square$

\section{Discussion}\Label{open}

In the case when $q$ is odd, (see \cite{conicqodd}) the assumptions of Theorem~\ref{mainthmqodd} allow us to construct a unique spread $\S$, such that in the Bruck-Bose plane $\P(\S)\cong\PG(2,q^2)$ the \cpt s, together with $P_\infty$, form a conic. In the case when $q$ is even, the assumptions of Theorem~\ref{main-theorem} do not lead to a unique spread. We have shown that there exists a regular spread $\S$ in $\si$ such that in the Bruck-Bose plane $\P(\S)\cong\PG(2,q^2)$, the \cpt s together with the point at infinity $P_\infty$ form a translation oval.
It is possible that there are other spreads $\S'$ for which the \cpt s in the Bruck-Bose plane $\P(\S')$ form an arc.
If there is such a spread $\S'$, we show it must contain $t_N$ and $t_\infty$, and that the remaining lines of $\S'$ each meet exactly one $\C$-line.

\begin{lemma}\Label{spread-cond}
 Let $\S'$ be any spread of $\Sigma_\infty$ containing $t_N$ and $t_\infty$.  Then in the Bruck-Bose plane $\P(\S')$, the set $\C\cup\{P_\infty,N\}$ is a hyperoval if and only if each spread line (other than $t_N$ and $t_\infty$) meets exactly one \cln.
\end{lemma}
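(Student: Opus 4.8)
The statement is an "if and only if" about when, in the Bruck–Bose plane $\P(\S')$ built from an arbitrary spread $\S'$ containing $t_N$ and $t_\infty$, the set $\C\cup\{P_\infty,N\}$ is a hyperoval. The natural strategy is to translate "hyperoval" into a statement about secant lines in $\PG(4,q)$: in $\P(\S')$ a set of affine points together with two points of $\li$ is a hyperoval exactly when (i) no three affine points of the set are collinear on an affine line of $\PG(4,q)$, (ii) no affine line of $\PG(4,q)$ through a point of the set meets a spread line joining two of the "infinite" points of the set appropriately — more usefully, a line of $\P(\S')$ is an affine plane of $\PG(4,q)$ through a spread line, and it meets $\C\cup\{P_\infty,N\}$ in at most two points iff the corresponding affine plane meets $\C$ in at most two points (when the spread line is neither $t_N$ nor $t_\infty$) or in at most one point (when it is $t_N$ or $t_\infty$, since then the line of $\P(\S')$ already passes through $P_\infty$ or $N$). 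So the proof reduces to: $\C\cup\{P_\infty,N\}$ is a hyperoval in $\P(\S')$ iff every affine plane through a spread line $r\in\S'\setminus\{t_N,t_\infty\}$ meets $\C$ in at most two points, and every affine plane through $t_N$ or $t_\infty$ meets $\C$ in at most one point.

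**Key steps.** First I would record the count: $\C\cup\{P_\infty,N\}$ has $q^2+2$ points, so to be a hyperoval in $\P(\S')$ (a plane of order $q^2$) it suffices that no line of $\P(\S')$ meets it in three or more points. Second, I would handle the spread lines $t_N$ and $t_\infty$: a line of $\P(\S')$ on the spread point $t_N$ (respectively $t_\infty$) is an affine plane $\pi$ of $\PG(4,q)$ containing $t_N$ (resp. $t_\infty$); since $t_N$ corresponds to $N$ and $t_\infty$ to $P_\infty$, such a line already contains one point of our set, so it meets $\C\cup\{P_\infty,N\}$ in $\le 2$ points iff $\pi$ meets $\C$ in $\le 1$ point. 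But by Corollary~\ref{inf-pts-in-si} the lines $t_N,t_\infty$ are precisely the two lines $t_N,t_\infty$ of Lemma~\ref{line-ppts} consisting of \ppt s, and the \pln\ of each \cpl\ is a \cln; I would use Lemma~\ref{arc-complete} together with the fact that each \cpl\ containing a given affine \cpt\ meets $\si$ in a \cln\ to argue that an affine plane through $t_N$ (or $t_\infty$) cannot contain two \cpt s — if it did, the line joining them would be a $2$-secant of a $q$-arc whose completion has its \ppt s on $t_N\cup t_\infty$, forcing the plane to be (contained in the span of) a \cpl, contradicting that \cln s are pairwise skew and $t_N$ meets them. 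Third, for a spread line $r\notin\{t_N,t_\infty\}$, a line of $\P(\S')$ on $r$ is an affine plane $\pi\supset r$, containing no point of $\li$ in our set, so I need: $\pi$ meets $\C$ in $\le 2$ points for all such $\pi$ and all such $r$, iff $r$ meets exactly one \cln.

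**The main case — the heart of the argument.** For the forward direction I would argue contrapositively: if $r$ meets no \cln, or meets two or more \cln s, I build an affine plane through $r$ with at least three \cpt s. If $r$ meets $\ge 2$ \cln s, say \cln s $\ell,m$, pick \cpt s in the corresponding parallel classes cleverly (using Lemma~\ref{arc-complete} and Corollary~\ref{def-cline}) so that the three \cpt s together with $r$ lie in a common $3$-space whose relevant plane through $r$ has three \cpt s — here Lemma~\ref{three-spts} and the structure of reguli on three \cln s is the tool, and this is where I expect the real work to be, since one must show such a configuration actually produces a $2$-secant-free but $3$-point-containing affine plane through $r$. If $r$ meets no \cln, a counting/incidence argument (every affine point off $\C$ lies on exactly one \cpl\ by (A3), and the \cpl s through a fixed \cpt\ sweep out lines to all \cln s) shows the $q+1$ planes $\langle r, P\rangle$ for $P$ a fixed \cpt\ cannot all be $\le 1$-secant, giving a $\ge 2$-secant plane through $r$; then one more \cpt\ is forced by (A4). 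For the converse — if every $r\ne t_N,t_\infty$ meets exactly one \cln — I would count: each \cln\ $m$ carries $q^2$ affine points of \cpl s in its parallel class, the $q^2$ \cpt s are partitioned by the $q+1$ parallel classes into blocks, and an affine plane $\pi\supset r$ with $r$ meeting the single \cln\ $m$ either is a \cpl\ (impossible, as \cpl s meet $\si$ in \cln s, not in a line through $r\ne$ any \cln), or meets $\C$ in $\le$ two points by (A4) combined with Lemma~\ref{arc} and the fact that three \cpt s in $\pi$ would force a fourth and hence (by the preceding sections) a contradiction with $r$'s position relative to the \cln s. Assembling these gives the equivalence.

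Throughout I would lean on the already-established facts — Lemmas~\ref{affine-plane}, \ref{arc}, \ref{parallel-class}, \ref{arc-complete}, \ref{three-spts}, \ref{line-ppts} and Corollaries~\ref{def-cline}, \ref{inf-pts-in-si} — rather than re-deriving anything, and the only genuinely new ingredient is the translation between "$\le 2$-secant affine planes through $r$" and "$r$ meets exactly one \cln", for which the three-\cln/regulus geometry of Lemma~\ref{three-spts} is the crux.
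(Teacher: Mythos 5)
Your overall route is the same as the paper's: reduce the hyperoval condition to a statement about affine planes of $\PG(4,q)$ through spread lines, dispatch the planes through $t_N$ and $t_\infty$ via Lemma~\ref{arc-complete} (a $2$-secant of $\C$ inside a \cpl\ meets $\si$ in a \spt, never in an \ppt), and use the quadrangle-with-\spt-diagonal-points configuration from the proof of Lemma~\ref{three-spts} as the crux in both directions: a non-\cpl\ affine plane with at least three \cpt s contains four \cpt s whose diagonal points are \spt s on three distinct \cln s, so such a plane cannot sit over a spread line meeting only one \cln, and conversely a spread line meeting two \cln s supports such a plane. That matches the paper's proof.

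There is, however, one genuine error. In your contrapositive argument for the direction ``hyperoval $\Rightarrow$ spread condition'' you treat the case where a spread line $r\notin\{t_N,t_\infty\}$ meets \emph{no} \cln\ by claiming that the planes through $r$ ``cannot all be $\le 1$-secant, giving a $\ge 2$-secant plane through $r$.'' This is false: if $A,B$ are two \cpt s in an affine plane $\pi$ through $r$, then the line $AB$ lies in the unique \cpl\ $\alpha$ through $A,B$, so $AB\cap\si$ lies on $\alpha\cap\si$, which is a \cln; since $AB\cap\si\in\pi\cap\si=r$, the line $r$ would meet that \cln. Hence a spread line meeting no \cln\ has \emph{every} affine plane through it meeting $\C$ in exactly one point, and no local contradiction with the hyperoval property arises from $r$ itself. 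The case must instead be closed globally: the $q+1$ pairwise skew \cln s carry $(q+1)(q-1)=q^2-1$ \spt s, none of which lies on $t_N$ or $t_\infty$, so they are partitioned among the $q^2-1$ remaining spread lines; once you know no such spread line meets two \cln s (your other, correct, subcase), each meets at most one \spt, and the count forces each to meet exactly one. With that replacement your proof goes through. (A smaller point: your justification that a plane through $t_N$ cannot contain two \cpt s --- ``forcing the plane to be contained in the span of a \cpl'' --- is garbled; the clean statement is the one above, that $AB\cap\si$ is a \spt\ of a \cln, while $t_N$ consists entirely of \ppt s.)
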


\begin{proof}
Let $\S'$ be a spread of $\si$ containing $t_N$ and $t_\infty$ such that each spread line (other than $t_N$ and $t_\infty$) meets exactly one \cln.  We show that $\C\cup\{P_\infty,N\}$ is a hyperoval in the Bruck-Bose plane $\P(\S'$). 
If an affine plane $\pi$ of $\PG(4,q)$ meets $\C$ in at least three points, $A,B,C$, then we use a very similar argument to the proof of Lemma~\ref{three-spts} to find a fourth point $\C$-point $D$ such that the diagonal points of the quadrangle $ABCD$ are \spt s. Hence a plane that meets $\C$ in more than two points is either a \cpl\  or  meets three \cln s. 
Thus every plane through a line of $\S'$ (other than $t_N$ and $t_\infty$) contains at most two \cpt s. Note that 
the affine planes through $t_N$ and $t_\infty$ all contain exactly one \cpt\ by Lemma~\ref{arc-complete}, as these lines contain only \ppt s.  
Hence in the Bruck-Bose plane $\P(\S')$, every line meets $\C\cup\{N,P_\infty\}$ in at most two points, and so it is a hyperoval.

Conversely, consider a spread  $\S'$ containing $t_N$ and $t_\infty$.  If in the Bruck-Bose plane $\P(\S')$, the set $\C\cup\{P_\infty,N\}$  is a hyperoval, then in $\PG(4,q)$, every affine plane through a spread line contains at most two \cpt s. So if a spread line (other than $t_N$ and $t_\infty$) meets two \cln s, then by Lemma~\ref{three-spts} and its proof, we can construct a plane through the spread line with four \cpt s, contradicting  $\C\cup\{N,P_\infty\}$ being a hyperoval.  Thus each spread line  (other than $t_N$ and $t_\infty$) meets exactly one \cln.
\end{proof}

Note that we can create a spread satisfying Lemma~\ref{spread-cond} by considering derivation. See \cite{barw08} for details on derivation and its representation in the Bruck-Bose correspondence. 
Let $\C$ be a conic of $\PG(2,q^2)$, $q=2^h$, meeting $\li$ in a point $P_\infty$ with nucleus $N\in \li$. Let $h$ be even, and let $\D$ be a derivation set (that is, a Baer subline of $\li$) not containing $N$ or $P_\infty$, and such that $P_\infty$ and $N$ are conjugate points with respect to $\D$. In  \cite{glyn93} it was shown that upon deriving with respect to $\D$, the points of $\C$ correspond to a translation oval in the derived plane $\H(q^2)$ (the Hall plane).
In $\PG(4,q)$, the derivation set $\D$ corresponds to a regulus $\R$ of the regular spread $\S$. Let $\R'$ be the opposite regulus of $\R$. Then the spread $\S'=\S\setminus \R\cup\R'$ corresponds (via the Bruck-Bose map) to the derived plane $\P(\S')=\H(q^2)$. By \cite{glyn93}, the \cpt s with $P_\infty$ form a translation oval in this plane. 
We conject that the only spread $\S'$ in $\si$ for which the \cpt s form a $q^2$-arc in the Bruck-Bose
plane $\P(\S')$  are those obtained from the regular spread $\S$ by reversing one or more disjoint reguli not containing $t_N$ or $t_\infty$.

\end{document}